\newtheorem{theorem}{Theorem}
\newtheorem{lemma}{Lemma}
\newtheorem{proposition}{Proposition}
\newcommand{\be}{\mathbf{e}}
\newcommand{\rd}{\mathrm{d}}
\newcommand{\bu}{\mathbf{u}}
\newcommand{\bv}{\mathbf{v}}
\newcommand{\bul}{\mathbf{u}^l}
\newcommand{\buh}{\mathbf{u}^h}
\newcommand{\buth}{\tilde{\mathbf{u}}^h}
\newcommand{\buhat}{\hat{\mathbf{u}}^l}
\newcommand{\bx}{\mathbf{x}}
\newcommand{\by}{\mathbf{y}}
\newcommand{\beps}{\bm{\varepsilon}}
\newcommand{\mN}{\mathcal{N}}
\newcommand{\mI}{\mathbf{I}}
\newcommand{\mWa}{\mathcal{W}_2}
\newcommand{\mz}{\boldsymbol{0}}
\newcommand{\mCR}{\mathcal{R}}
\newcommand{\mE}{\mathbb{E}}
\title{Improving Data Fidelity via Diffusion Model-Based Correction and Super-Resolution}
\author{Wuzhe Xu\thanks{Department of Mathematics and statistics, University of Massachusetts Amherst, MA 01002, USA. Email address: wuzhexu@umass.edu} \and 
Yulong Lu\thanks{206 Church St. SE, School of Mathematics, University of Minnesota, Minneapolis, MN 55455, USA.
Email address: yulonglu@umn.edu}
\and
Sifan Wang\thanks{Institution for Foundation of Data Science, Yale University, New Haven, 06520, Email address: sifan.wang@yale.edu}
\and
Tong-Rui Liu\thanks{Department of Aeronautics, Imperial College London, SW7 2AZ, London, UK. Email address:tongrui.liu18@imperial.ac.uk}
}
\date{October 2024}
\begin{document}

\maketitle

\begin{abstract}
    We propose a unified diffusion model-based correction and super-resolution method to enhance the fidelity and resolution of diverse low-quality data through a two-step pipeline. First, the correction step employs a novel enhanced stochastic differential editing technique based on an imbalanced perturbation and denoising process, ensuring robust and effective bias correction at the low-resolution level. The robustness and effectiveness of this approach are validated theoretically and experimentally. Next, the super-resolution step leverages cascaded conditional diffusion models to iteratively refine the corrected data to high-resolution. Numerical experiments on three PDE problems and a climate dataset demonstrate that the proposed method effectively enhances low-fidelity, low-resolution data by correcting numerical errors and noise while simultaneously improving resolution to recover fine-scale structures.
\end{abstract}

\section{Introduction}
High-fidelity, high-resolution (HFHR) data provides the detailed, fine‐scale information necessary to accurately resolve multiscale phenomena, and accurate prediction of phenomena such as extreme weather events, turbulent flows, and long-term climate dynamics. However, obtaining or generating such data is often prohibitively expensive: high-resolution measurements require dense sensor deployments or advanced satellite instrumentation, while high-resolution simulations demand significant computational resources due to the underlying nonlinear, multiscale, and chaotic nature of physical systems. Consequently, only low-fidelity, low-resolution (LFLR) data is typically available in practice, due to both observational and computational limitations. On the observational side, data is often collected on coarse spatial grids and contaminated by noise. For example, in the ERA5 reanalysis dataset \cite{hersbach2020era5}, the spatial resolution of raw satellite measurements varies by sensor and is often very coarse in certain regions. These observations are further degraded by noise arising from instrument limitations, environmental interference, and data processing artifacts \cite{ma2008evaluation,bromwich2004strong}. On the computational side, simulating complex physical systems at high resolution is often infeasible due to the curse of dimensionality. On the other hand, low-resolution solvers, while more efficient, introduces significant numerical errors thus reduce the data fidelity.

One approach to addressing these challenges is to leverage downscaling or super-resolution (SR) techniques, which seek to enhance both the quality and resolution of data starting from low-fidelity, low-resolution (LFLR) inputs. Such techniques, particularly those based on data-driven models, have shown significant promise in computer vision, where deep learning methods have been widely successful in improving image resolution. Examples include models based on convolutional neural network \cite{dong2015image, lim2017enhanced}, neural operators \cite{wei2023super}, Generative Adversarial Network (GAN) \cite{brock2018large, denton2015deep}, and diffusion models \cite{li2022srdiff, saharia2022image}. Recently, efforts have extended the application of these SR models to diverse fields, including climate simulation and prediction \cite{sinha2024effectiveness, watt2024generative, yang2023fourier, rampal2024robust} , medical image enhancement \cite{chan2023super, yoon2024latent, ahmad2022new}, and solving PDEs \cite{li2024physics, Lu2024}. 

While effective, standard SR methods face significant bottlenecks. First, they rely heavily on paired LFLR and HFHR data for training, which are often unavailable or do not  exist in practice. For example, climate systems are chaotic, meaning that small differences in initial conditions can lead to vastly different outcomes over time. As a result, even if a HFHR simulation is initialized with the same conditions as a LFLR simulation, their trajectories will diverge over time. One promising approach that eliminates the need for LFLR and HFHR data pair is to leverage the physics-informed machine learning framework, see \cite{li2024physics,harder2023hard}. However, in many real-world scenarios, such as climate science, the underlying physics is often too complex to model accurately. Even when a good model does exist, it is often prohibitively expensive to exactly enforce the physics. This motivates one primary goal of the paper: to propose a data-driven approach that enhances data fidelity without replying on the underlying physical model.

Another challenge arises from the fact that LFLR data in many physical fields usually contain various sources of biases. This is in contrast to image SR tasks where LR images are usually a straightforward downsampled versions of HR images. For example, in computational fluid dynamics, downsampling a HFHR simulation does not necessarily match the output of the same solver run on a coarse grid. This is because coarse-grid solvers lack the capacity to accurately resolve small-scale processes, such as turbulence, which can accumulate and influence large-scale dynamics. Additionally, when the simulated solution contains sharp transitions, such as shocks or interfaces, that require high resolution to be accurately captured, coarse-grid solvers often fail to resolve these features. This can lead to spurious numerical errors such as artificial diffusion, incorrect wave speeds, or oscillations near discontinuities. Furthermore, in many experimental settings, LFLR data is often contaminated by various types of noise, as previously discussed\cite{ma2008evaluation,bromwich2004strong}. Therefore, it is crucial to correct the bias prior to  the super-resolution. 

Although deep learning models \cite{levin2012patch, mao2016image, zhang2017beyond, zhang2018ffdnet} have shown promising performance in debiasing errors in LFLR data, they typically rely on training datasets that contain a specific type of bias. As a result, these models often struggle to generalize across diverse biases arising from different error sources. This limitation highlights the need for a more versatile debiasing approach that can effectively address biases in an error-source-agnostic manner.

The goal of this paper is to address the aforementioned challenges by developing a unified computational method capable of enhancing various types of LFLR data without requiring knowledge of the underlying physics.




\subsection{Problem Description}
Let $\bul$ and $\buh$ denote the LFLR data and HFHR data respectively. Beyond the resolution difference, LFLR data often contains biases that must be corrected at the low-resolution level. To relate the two, we introduce a downsampling operator $\mCR$ that maps the HFHR data $\buh$ to its low-resolution counterpart $\buth = \mCR \buh$, referred to as high-fidelity low-resolution (HFLR) data, which matches the resolution of the LFLR data $\bul$. The discrepancy between the LFLR data $\bul$ and the HFLR data $\buth$ defines the bias $\be$, yielding the relation $\bul = \buth + \be$. These biases may arise from different sources, such as running different low-resolution models in scientific computing problems or data being polluted by various types of noise. To account for multiple types of biases, we use $\be_i$ to denote $i$-th type of bias in LFLR data, leading to $\bul_i = \buth + \be_i, ~i=1, \cdots, n$. Based on this setting, the problem of interest can be addressed using a two-step pipeline:
\begin{enumerate}
    \item \textit{Correction} step: Remove the biases $\be_i$ from $\bul_i$ to recover the corresponding $\buth$, using a unified model for all $i = 1, \dots, n$.
    \item \textit{Super-resolution} step: Upscale $\buth$ to reconstruct $\buh$.
\end{enumerate}
For simplicity, we slightly abuse notation by using $\buth$ to also denote the corrected HFLR approximation in the context above. This two-step pipeline, along with the relationships among the LFLR $\bul$ data, HFLR data $\buth$ and HFHR data $\buh$, are illustrated in Figure~\ref{fig:demo_u}. 
\begin{figure}[h]
    \centering
    \includegraphics[height=0.35\linewidth, width=0.55\linewidth]{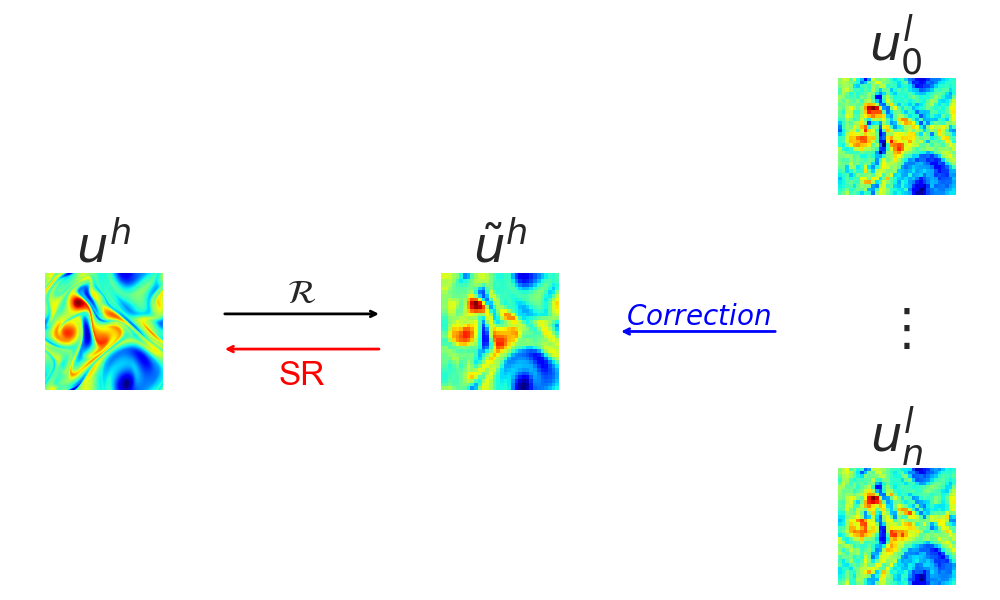}
    \caption{Diagram of the two-step pipeline. The Correction step (blue arrow) removes various biases from LFLR data $\bul_i, i=1, \cdots, n$ to recover HFLR data $\buth$. The restriction operator $\mathcal{R}$ (black arrow) maps the HFHR data $\buh$ to its HFLR version $\buth$ and the Super-Resolution step (red arrow) performs a pseudo-inverse mapping to enhance the resolution of HFLR data $\buth$ and reconstruct the HFHR data $\buh$.}
    \label{fig:demo_u}
\end{figure}

\subsection{Related Works}

\textbf{Neural Operators.} Neural operators are designed to approximate mappings between infinite-dimensional function spaces. Notable architectures includes Fourier Neural Operators (FNO) \cite{li2021fourier, guibas2021adaptive, wen2022u, fanaskov2023spectral, you2022learning}, Deep Operator Networks (DeepONet) \cite{lu2021learning, wang2022improved, wang2021learning, wang2023long, zhu2023fourier, goswami2022physics}, and Transformed-based networks \cite{wang2024bridging, hao2023gnot, li2022transformer,mccabe2023multiple}. Since the output of a neural operator is a function, it naturally supports zero-shot super-resolution. This feature suggests its potential to bridge the resolution gap without relying on high-fidelity training data. However, as highlighted in \cite{li2024physics}, training on low-resolution data and predicting on a high-resolution grid inevitably fails to capture the fine structures in the data. To remedy this issue, the authors enforce underlying physics by introducing an additional physics-informed loss. However, in our setting, we do not have the access to the underlying physics. An alternative approach, such as \cite{wei2023super,yang2023fourier}, that using neural operators for downscaling is to approximate the pseudo-inverse operator that maps LFLR data to their HFHR counterparts. In this case, neural operators require paired datasets $\{ \bul, \buh \}$ to train a pseudo-inverse mapping $\mathcal{G}: \bul \rightarrow \buh$. However, their reliance on  paired data makes them unsuitable for our problem.

\textbf{Unpaired Domain Translation} The task of correcting LFLR data to align with HFLR data in the Correction step is an instance of the unpaired domain translation task, which translates data from a source domain (LFLR) to a target domain (HFLR). Many data-driven models have been proposed to address this domain translation problem, particularly in image-to-image translation tasks. For example, CycleGAN \cite{zhu2017unpaired}, a GAN-based framework for unpaired image-to-image translation, achieves this by introducing a novel loss of cycle consistency that captures the underlying correspondence between domains. Another approach is the Optimal Transport (OT)-based method, such as Neural Optimal Transport (NOT) \cite{korotin2022neural}, which parameterizes the optimal transport maps using neural networks, enabling efficient unpaired image-to-image translation. Additionally, the Unpaired Neural Schrödinger Bridge (UNSB) \cite{kim2023unpaired} has been proposed to learn a SDE for translating between two distributions, offering another effective method for unpaired domain translation. 

While these frameworks address the domain translation task using unpaired datasets, they are designed for translation between a specific pair of the source and target domains and cannot be directly generalized to other domain pairs. Here, we refer to this as a "domain pair", which should not be confused with "paired datasets" that emphasizes the one-to-one correspondence between individual data points from the source and target domains. Applying these frameworks to multiple domains translation requires a quadratic number of models relative to the number of domains, limiting their scalability. To resolve this, a Dual Diffusion Implicit Bridges (DDIB) \cite{su2022dual} have been proposed. DDIB utilizes two chained diffusion models \cite{song2020score,ho2020denoising}, one for each domain, and bridges the two domains through a shared latent space, circumventing the need for paired data. Furthermore, because these two diffusion models are trained independently on their respective domains, they can be reused for translation tasks involving unseen domains. However, this approach remains task-specific, as it requires empirical datasets from both domains for training.

\textbf{Diffusion-based Correction.} 
The SDEdit \cite{meng2021sdedit} provides a promising solution to the challenges of unpaired training data and task-specific limitations discussed earlier. Unlike DDIB, which requires training separate diffusion models for each domain and bridges them through a shared latent space, SDEdit eliminates the need for source domain data during training. The primary idea is to perturb the data with a appropriate level of noise, such that inherent biases $\be$ become negligible compared to the injected noise. A diffusion model trained to generate target domain data is then applied for denoising, removing the combined noise (injected noise and inherent bias) to produce a corrected data that aligns with the target domain. This makes SDEdit particularly well-suited for multi-task bias correction,  and it has thus been widely used in various scenarios for removing biases from data, a process commonly referred to as data purification \cite{nie2022diffusion, yoon2021adversarial}. 

However, the direct application often lack accuracy and robustness to various types and scales of bias. To address these limitations, enhanced purification diffusion models \cite{wang2022guided, li2024adbm} have been proposed. However, these methods are either computationally expensive or rely on prior knowledge of the bias type and magnitude during training. Unfortunately, such prior knowledge is rarely available in practice. For example, in scientific computing problems, biases may depend on the distribution of the high-fidelity data, which is often unknown. In this paper, we propose a novel imbalanced perturbing and denoising method for SDEdit to correct biases in LFLR data. This method significantly enhances correction performance by improving robustness and accuracy, as validated through both theoretical analysis (Section~\ref{sec:dc}) and experimental results (Section~\ref{sec:numerical}).

\textbf{Direct Super-resolution} Although many deep learning frameworks have proven effective for Super-Resolution tasks, they often suffer from various limitations. For instance, the Normalization Flow models \cite{groenke2020climalign, winkler2024climate} are computationally prohibitive for high-resolution image generation and often produce suboptimal sample quality, and GAN \cite{li2024generative, harris2022generative} require carefully designed regularization and optimization techniques to address instability during training. To address these limitations, we adopt Image Super-Resolution via Iterative Refinement (SR3) \cite{saharia2022image}, a diffusion model-based SR framework known for its training stability, computational efficiency, and ability to generate high-quality HR images through iterative refinement. As discussed by the authors, the SR3 can be applied in a cascaded manner, dividing a high magnification super-resolution task into a sequence of smaller super-resolution tasks. This cascading approach, referred to as cascade SR3, is more efficient, requiring fewer sampling steps to achieve comparable high-resolution quality, and thereby is the model adopted for the SR step in this work.

\subsection{Most Relevant Works}
Two works most relevant to ours, though based on different approaches and for different goals, are \cite{wan2023debias} and \cite{xu2025diffusion}. Both propose probabilistic super-resolution methods for unpaired data by decomposing the downscaling task into two stages. First, they apply a debiasing step at the low-resolution (LR) level, using an OT-based model in \cite{wan2023debias} and an enhanced DDIB model in \cite{xu2025diffusion}. This is followed by an upsampling step to enhance the resolution of the debiased data. Despite these similarities, our approach differs from both in two significant ways. First, their methods focus on recovering statistical properties, whereas our approach emphasizes improving data fidelity by explicitly removing biases. Second, their models are task-specific, requiring a separate model for each type of bias and thus lacking versatility.



\subsection{Contributions and Applications}
The contribution of this paper are two-fold:
\begin{itemize}
    \item \textbf{Versatile Fidelity Improvement Model}: A novel purely data-driven fidelity improvement method, diffusion model-based correction and super-resolution (DCSR), is proposed. This approach trains solely on HFHR reference data and can then be effectively applied to enhance the fidelity of various types of LFLR data.

    \item \textbf{Enhanced SDEdit with Imbalanced Perturbing and Denoising (IPD)}: We propose an enhanced SDEdit that incorporates a novel Imbalanced Perturbing and Denoising process for the correction step. This method enables robust correction of various biases, including numerical errors and noise, and is validated both theoretically and experimentally.
    
\end{itemize}

\subsection{Structure of the Paper}
Section~\ref{sec:dm_intro} reviews the background of diffusion models. Section~\ref{sec:method} introduces the proposed DCSR method, which enhances the fidelity of LFLR data through two main steps. In Subsection~\ref{sec:dc}, we develop an enhanced SDEdit based on a novel imbalanced perturbing and denoising process for data correction at the LR level, along with the theoretical analysis demonstrating its robustness and effectiveness. Next in Subsection~\ref{sec:sr} describe the SR3 model, which upsamples the corrected LR data to its HR counterpart. The full pipeline is illustrated in Subsection~\ref{sec:dcsr}. Finally, Section~\ref{sec:numerical} presents numerical experiments on three PDE benchmarks and one climate dataset to demonstrate the effectiveness of the proposed method.

\section{Background of Diffusion Models}\label{sec:dm_intro}
Score-based diffusion models \cite{song2020score, yang2023diffusion} are designed to learn a target distribution $P(\bx)$ from a set of training samples, enabling the generation of new data drawn from $ P(\bx)$. In this work, an unconditional diffusion model is utilized in the correction step to refine LR data. For the SR step, we adopt a conditional diffusion model to learn the conditional distribution  $P(\bx|\by)$, where $\bx$ denotes the high-resolution data and $\by$ the corresponding low-resolution input. Under this formulation, the SR problem naturally becomes a conditional sampling task. In the following, we briefly review the mathematical formulation of diffusion models in the unconditional setting; the extension to the conditional setting is straightforward.

Diffusion models involve a two-stage process. In the forward process, the data is gradually corrupted with noise according to a user-specified, transforming the target data distribution $P(\bx)$ into a Gaussian distribution. Two standard choices for the SDE are the Variance Exploding SDE (VE SDE) and the Variance Preserving SDE (VP SDE)\cite{song2020denoising}. Based on our numerical results, they perform similarly for our problem and we adopt the VE SDE in this paper for simplicity. The VE SDE is defined as:
\begin{equation}\label{eqn:diff}
\mathrm{d}\bx(t) =  \sqrt{\frac{\rd [\sigma^2(t)]}{\rd t}} \, \mathrm{d}\mathbf{w}    
\end{equation}
where $\mathrm{d}\mathbf{w}$ is a standard Wiener process, $\sigma(t)$ is a time-dependent noise scheduling function, and the initial condition is $\bx(0):=\bx \sim P(\bx)$. Over time, the distribution evolves to $\bx(t) \sim \mathcal{N}(\bx, \sigma^2(t) \mI)$. By selecting an appropriate scheduling function $\sigma(t)$ such that the variance at $t=1$ is much larger than the magnitude of $\bx$, the marginal distribution at $t=1$ can be approximated as an isotropic Gaussian distribution $\mathcal{N}(\bx, \sigma^2(t) \mI) \approx \mathcal{N}(\mz, \sigma^2(t) \mI)$. The reverse process is described by the corresponding reverse time SDE that progressively transforming the isotropic Gaussian distribution back into the original data distribution. The reverse-time SDE is given by:
\begin{equation}\label{eqn:reverse}
\mathrm{d}\mathbf{x}(t) = \left[ -  \frac{\rd [\sigma^2(t)]}{\rd t} \nabla_{\mathbf{x}} \log p_t(\mathbf{x})\right] \mathrm{d}t + \sqrt{\frac{\rd [\sigma^2(t)]}{\rd t}} \, \mathrm{d}\bar{\mathbf{w}},
\end{equation}
where $\mathrm{d}\mathbf{\bar{w}}$ represents a reverse-time Wiener process. Moreover, Song et al \cite{song2020score} prove the existence of the probability flow ODE, which shares the same marginal distributions as the reverse-time SDE:
\begin{equation}\label{eqn:pfode_ori}
\frac{\rd \bx}{\rd t} = - \frac{1}{2} \frac{\rd [\sigma^2(t)]}{\rd t} \nabla_{\mathbf{x}} \log p_t(\mathbf{x}),
\end{equation}
In practice, the noise-perturbed score function $\nabla_{\mathbf{x}} \log p_t(\mathbf{x})$ is approximated using a time-dependent neural network $S_{\theta}(\bx_t, t)$ trained via a score-matching objective:
\begin{equation}\label{eqn:loss}
    \theta^* \in \arg \min_{\theta} L(\theta):= \mathbb{E}_{t \sim U(0, 1), \bx \sim p(\bx), \beps \sim \mN(\mz, \mI)} [\| \sigma(t) S_{\theta}(\bx(t), t) + \beps \|_2^2].
\end{equation}
With a well-trained $S_{\theta}(\bx_t, t)$, a new data can be generated using either the PF ODE \eqref{eqn:pfode_ori} or reverse-time SDE \eqref{eqn:reverse}. In the former approach, a new samples can be generated by solving the back PF ODE \eqref{eqn:pfode_ori} using any numerical ODE solver. We use notation $\text{ODEsolve}$ to present the solution of ODE driven by velocity field $-\frac{1}{2} \frac{\rd [\sigma^2(t)]}{\rd t} v(\bx(t), t)$, starting from initial condition $\bx(t_1)$ at $t_1$ and evolving to $t_2$. This solution is formally denoted as:
\begin{equation}\label{eqn:pfode_notation}
\text{ODEsolve}(\bx(t_1), t_1, t_2; v):=\bx(t_2) = \bx(t_1) + \int_{t_1}^{t_2} - \frac{1}{2} \frac{\rd [\sigma^2(t)]}{\rd t} v(\bx(t), t) \rd t
\end{equation}
Under this setting, a new samples generated from the PF ODE can be represented as:
\begin{equation}\label{eqn:pfode}
\bx(0) = \text{ODEsolve}(\bx(1), 1, 0; S_{\theta}), \quad \bx(1)\sim \mN(\mz, \sigma^2(t) \mI).
\end{equation}
We adopt this approach for the \textit{correction} step due to it's efficiency. In general, generating data using reverse-time SDEs produces higher quality results compared to using an ODE solver but is less efficient due to the requirement of a large number of sampling steps. Therefore, for the SR step, where recovering small-scale details is critical, we use the reverse-time SDE \eqref{eqn:reverse} for sampling. This is typically implemented via the Euler-Maruyama method, starting with an initial condition $\bx(1) \sim \mN(\mz, \sigma^2(1) \mI)$, and evolving over time interval $t=1$ to $t=0$ with a uniform time discretization step $\Delta t = \frac{1}{N_T}$. At each step, the data is updated from $t$ to $t-\Delta t$ using the rule:
\begin{equation}\label{eqn:EM}
\bx(t-\Delta t) = \bx(t) + (\sigma^2(t-\Delta t) - \sigma^2(t)) S_{\theta}(\bx(t), t) + \sqrt{\sigma^2(t) - \sigma^2(t-\Delta t)} \boldsymbol{z}, \quad \boldsymbol{z} \sim \mN(\mz, \mI).
\end{equation}
This iterative process continues until $t=0$, producing a new sample $\bx(0)$. While various fast sampling methods, such as \cite{song2020denoising,zhang2022fast, lu2022dpm}, have been proposed, they are beyond the scope of this paper and are mentioned here for completeness.

\section{Methodology}\label{sec:method}
\subsection{Diffusion-Based Correction}\label{sec:dc}

\noindent \textbf{SDEdit}. 
SDEdit \cite{meng2021sdedit} was originally proposed as a diffusion-based image editing technique. In particular, it was demonstrated to efficiently transform stroke-based images into realistic versions while preserving their overall structure. This idea was later extended for various other tasks, including the data purification that defends against adversarial perturbations \cite{nie2022diffusion, wang2022guided}. Although these applications use different terminology, their objective aligns closely with the \textit{correction} step discussed in this paper: removing the bias term $e$ from LFLR $\bul = \buth +e$ to recover HFLR $\buth$. This process consists of two stages. In the first stage, a LFLR data $\bul$ is perturbed by running the forward-time SDE \eqref{eqn:diff} with time $t$, and the perturbed data is denoted as $\bul(t)=\buth + \be + \sigma(t) \beps,~ \beps\sim \mN(\mz, \mI)$. In the second stage, a diffusion model trained on the HFLR dataset $\{ \buth \}$ is used for denoising, which removes the combined noise (the existing bias $e$ and the added perturbation) by running the backward PF ODE \eqref{eqn:pfode_notation} for same time $t$ to generate a correction that approximates the HFLR $\buth$. 

This unconditional diffusion model is denoted by $S_{\theta}(\buth(t), t)$, where $\theta$ is the trainable weights, and it is trained by minimizing the following Loss function
\begin{equation}\label{eqn:loss_dc} L(\theta):= \mathbb{E}_{t \sim U(0, 1), \buth \sim P(\buth), \beps \sim \mN(\mz, \mI)} [\| \sigma(t) S_{\theta}(\buth(t), t) + \beps \|_2^2].
\end{equation}
The training procedure is detailed in Algorithm~\ref{alg:utilde}.
\begin{algorithm}[H]
\caption{Unconditional Diffusion Model for HFLR data $\buth$}
\label{alg:utilde}
\begin{algorithmic}[1]
\REQUIRE Training datasets $\mathcal{T} = \{ \tilde{u}_i^h \}_{i=1}^N$, noise scheduling function $\sigma(t)$, batch size $B$ and max iteration $Iter$
\STATE Initialize $k=0$
\WHILE{$k<Iter$}
    \STATE Sample $\{ \tilde{u}_j^h \}_{j=1}^B \sim \mathcal{T}$ 
    \STATE $t \sim U[0, 1]$
    \STATE $\beps_j \sim \mN(\mz, \mI)$ for $j=1, \cdots, B$
    \STATE Compute $\tilde{u}_j^h(t) = \tilde{u}_j^h + \sigma(t) \beps_j$
    \STATE  Update $\theta$ using Adam to minimize the empirical loss \eqref{eqn:loss_dc}
    \STATE $k \gets k+1$
\ENDWHILE
\RETURN Diffusion Model $S_{\theta}(\buth(t), t)$
\end{algorithmic}
\end{algorithm}

Existing diffusion-based data correction approaches use balanced perturbing and denoising (BPD), where the forward perturbation and the backward denoising processes are evolved for the same amount of time $t$. However, BPD faces a fundamental challenge in selecting the optimal hyperparameter $t$. A sufficiently large $t$ is necessary to ensure the injected noise $\sigma(t)\beps$ dominates the existing bias $\be$. Yet, excessive $t$ values can deteriorate the large-scale structure, causing the denoising process to generate random samples from the HFLR distribution $P(\buth)$ rather than preserving the original structural features.
The robustness of BPD relies on a good choice of the parameter $t$. 

To address this issue, several improvements have been proposed. For instance, in \cite{wang2022guided}, the authors propose to alleviate this issue by first repeating the correction step multiple times with a relatively small $t$ in each time, and then employing a guided diffusion model that uses the LFLR data as a guidance to ensure the correction does not deviate significantly from the LFLR, thereby preserving the large-scale structure. However, this approach is computationally inefficient due to the repeated correction steps and the extra guidance that requires gradient computations. In \cite{li2024adbm}, the authors directly constructed a reverse bridge to map LFLR data to its HFLR counterpart. While effective, this method relies on the prior knowledge of the bias within the LFLR data for training, which is unavailable in our setting.


\textbf{Imbalanced Perturbation and Denoising}. To address the lack of robustness in standard BPD, we propose the IPD, which improves upon BPD by using two independent times, $t_1$ and $t_2$, for the perturbation and denoising steps, respectively, instead of a single shared time $t$. We denote the LFLR data distribution and HFLR data distribution by $p(\bul)$ and  $p(\buth)$ respectively. Starting with the initial condition $\bx(0) = \bul \sim p(\bul)$, running the forward-time SDE \eqref{eqn:diff} for a time $t_1$ produces a perturbed solution $\bul(t_1)$, distributed as $p(\bul(t_1))$. Similarly, starting with initial condition $\bx(0) = \buth \sim p(\buth)$, running the same forward-time SDE \eqref{eqn:diff} for a time $t_2$ produces a solution $\buth(t_2)$, distributed as $p(\buth(t_2))$. During the perturbation process, $p(\bul(t_1))$ and $p(\buth(t_2))$ gradually mix and align closely as $t_1, t_2$ increase, and eventually converging to the same isotropic Gaussian distribution $\mN(\mz, \sigma^2(1) \mI)$ as $t_1,t_2 \rightarrow 1$. This implies that there exist $t_1$ and $t_2$ such that the perturbed data $\bul(t_1)$ can be treated as a sample from the distribution $p(\buth(t_2))$. Building on this, we design the IPD mechanism to decouple the perturbation and denoising steps by independently choosing $t_1$ and $t_2$:
\begin{itemize}
    \item \textbf{Perturbation}. Perturb the LFLR data $\bul$ by running forward time SDE for time $t_1$, resulting in a perturbed data
    \begin{equation}\label{eqn:perturbing}
        \bul(t_1)=\bul + \sigma(t_1) \beps, ~\beps \sim \mN(\mz, \mI).
    \end{equation}
    \item \textbf{Denoising.} Run the backward PF ODE \eqref{eqn:pfode_notation} from $t=t_2$ to $t=0$, with initial condition $\bx(t_2)=\bul(t_1)$ and the score function $S_{\theta}(\buth(t), t)$ as the velocity field. We denote by the resulting correction by 
    \begin{equation}\label{eqn:recon}
\buhat(t_1, t_2) = \text{ODEsolve} (\bul(t_1), t_2, 0; S_{\theta}).
\end{equation}
\end{itemize}
The illustration of the IPD is provided in Figure~\ref{fig:diagram}.

\begin{figure}[H]
    \centering
    \includegraphics[width=0.8\linewidth]{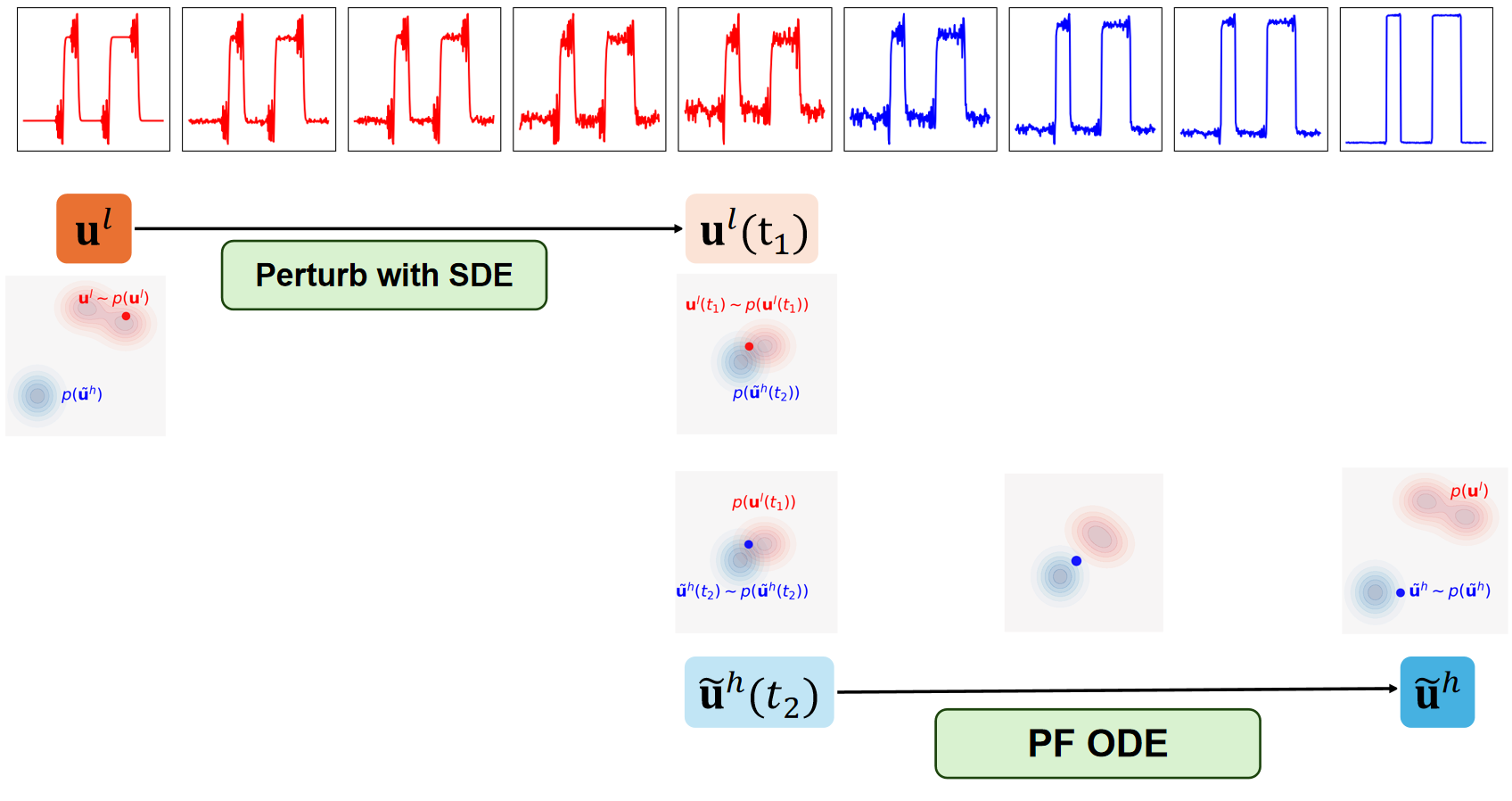}
    \caption{Diagram of the imbalanced perturbing and denoising (IPD) process. The process begins with perturbing the LFLR data $\bul$ using forward time SDE \eqref{eqn:diff} to produce perturbed data $\bul(t_1)$, whose distribution $p(\bul(t_1))$ aligns with the perturbed HFLR distribution $p(\buth(t_2))$. As a result, the perturbed data $\bul(t_1)$ can be treated as a sample from distribution $p(\buth(t_2))$. Subsequently, the Probability Flow (PF) ODE maps take this perturbed data to the HFLR data $\buth$. The first row illustrates how an LFLR data point is corrected to produce the HFLR data, while the bottom two rows depict the evolution of distributions throughout the process.}
    \label{fig:diagram}
\end{figure}

While the IPD mechanism enables flexible and decoupled perturbation and denoising, the choice of time parameters $t_1, t_2$ plays a critical role in its effectiveness. These parameters must be sufficiently large to ensure that $P(\bul(t_1))$ and $P(\buth(t_2))$ align closely. However, excessively large values of $t_1, t_2$ may result in an over-perturbed $\bul(t_1)$, potentially destroying the large-scale structure in the LFLR data $\bul$ and degrading the correction quality. This trade-off is theoretically characterized by Theorem~\ref{thm}, whose proof is provided in the Appendix~\ref{sec:proof}.

\begin{theorem}\label{thm}
    For a HFLR data $\buth \sim p(\buth)$, let $\bul = \buth + \be$ be a LFLR data, where $\be$ is the bias. Assume that the approximate score function $S_{\theta}(\buth(t), t)$ is $L_s$-Lipschitz continuous in $\buth(t)$ for $t\in [0, 1]$ and $0<t_1<t_2<1$. If the empirical training loss defined in equation \eqref{eqn:loss_dc} satisfies $L(\theta)<\delta$, for all $\lambda \in (0,1)$, with probability at least $1-\lambda$, the $L^2$ distance between the target HFLR data $\buth$ and the correction $\buhat(t_1, t_2)$ is bounded by
    \begin{align*}
        \| \buhat(t_1, t_2) - \buth \|^2_2 \leq e^{2L_s t_2} [\| \be \|_2^2 + \sigma^2(t_2)\delta + (\sigma^2(t_1) + \sigma^2(t_2)) C_{\lambda}].
    \end{align*}
    where $C_\lambda = d + 2 \sqrt{d \log\frac{1}{\lambda}} + 2\log\frac{1}{\lambda}$ and $d$ is the dimension of $\buth$. 
\end{theorem}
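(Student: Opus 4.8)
The plan is to realize the correction $\buhat(t_1,t_2)$ and the target $\buth$ as the terminal values at $t=0$ of two probability-flow ODE trajectories started at time $t_2$: the learned flow $\dot\bx=-\tfrac12\tfrac{\rd[\sigma^2]}{\rd t}S_\theta(\bx,t)$ started at the perturbed datum $\bul(t_1)$, and an exactly solvable \emph{reference} flow started at a genuinely Gaussian-perturbed target $\buth+\sigma(t_2)\beps_2$. The reference uses the Dirac-at-$\buth$ score $s_\star(\bx,t)=-(\bx-\buth)/\sigma^2(t)$, whose VE PF ODE integrates in closed form to $\mathbf{z}(t)=\buth+\tfrac{\sigma(t)}{\sigma(t_2)}(\mathbf{z}(t_2)-\buth)$ and therefore lands on $\buth$ at $t=0$ (using $\sigma(0)=0$) regardless of its starting point. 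The problem then reduces to a Gr\"onwall comparison of the two flows that accounts simultaneously for their different starting points at $t_2$ and for the mismatch between the driving fields $S_\theta$ and $s_\star$.

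First I would write the evolution of the gap $\Delta(t)=\bx(t)-\mathbf{z}(t)$ and split the velocity difference as $S_\theta(\bx,t)-s_\star(\mathbf{z},t)=[S_\theta(\bx,t)-S_\theta(\mathbf{z},t)]+[S_\theta(\mathbf{z},t)-s_\star(\mathbf{z},t)]$. The first bracket is $L_s$-Lipschitz in $\Delta$, and integrating its contribution over the flow interval $[0,t_2]$ produces, via Gr\"onwall's inequality, the amplification factor $e^{2L_s t_2}$ on the squared distance. The second bracket is the score-approximation error, and here I would use the algebraic identity that for the VE SDE the training integrand \emph{is} $\sigma(t)$ times this error: along the reference path $\mathbf{z}(t)=\buth+\sigma(t)\beps_2$ one has $s_\star(\mathbf{z}(t),t)=-\beps_2/\sigma(t)$, hence $\sigma(t)\,[S_\theta(\mathbf{z},t)-s_\star(\mathbf{z},t)]=\sigma(t)S_\theta(\buth+\sigma(t)\beps_2,t)+\beps_2$, which is exactly the residual appearing in the loss \eqref{eqn:loss_dc}. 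Consequently the time-integrated squared score error is controlled by the loss bound $\delta$, and carrying the $\sigma^2$ weights through the integral yields the term $\sigma^2(t_2)\delta$.

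Next I would account for the initial mismatch at $t=t_2$. Since the reference is started at $\buth+\sigma(t_2)\beps_2$, the gap there is $\bul(t_1)-(\buth+\sigma(t_2)\beps_2)=\be+\sigma(t_1)\beps-\sigma(t_2)\beps_2$, whose squared norm separates the deterministic bias contribution $\|\be\|_2^2$ from the two Gaussian contributions at scales $\sigma^2(t_1)$ and $\sigma^2(t_2)$. The surviving randomness is $\|\beps\|_2^2$ and $\|\beps_2\|_2^2$, each a $\chi^2_d$ variable, so the Laurent--Massart tail bound $\mathbb{P}\bigl(\|\beps\|_2^2\ge d+2\sqrt{d\log\tfrac1\lambda}+2\log\tfrac1\lambda\bigr)\le\lambda$ gives, with probability at least $1-\lambda$, the term $(\sigma^2(t_1)+\sigma^2(t_2))C_\lambda$ with $C_\lambda$ exactly as stated. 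Feeding this initial mismatch and the integrated score error through the Gr\"onwall estimate and collecting the prefactor $e^{2L_s t_2}$ assembles the three bracketed terms.

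The step I expect to be the main obstacle is transferring the \emph{in-expectation} loss bound $L(\theta)<\delta$ into a \emph{pathwise} control of the score error along one specific reference trajectory with a frozen noise realization $\beps_2$: the loss averages over $t\sim U(0,1)$, over $\buth$, and over the Gaussian perturbation, whereas the Gr\"onwall argument needs the integral of the squared residual along a single realized path. Closing this gap cleanly—and ensuring the reference path is probed at the \emph{correct} Gaussian noise level so the loss identity is legitimate, rather than at the mismatched level $\sigma(t_1)/\sigma(t_2)$ that a naive single-flow comparison started directly from $\bul(t_1)$ would produce—is the crux. A secondary technical point is keeping the bias--noise cross terms in the initial mismatch from generating spurious constants, so that the clean additive structure $\|\be\|_2^2+\sigma^2(t_2)\delta+(\sigma^2(t_1)+\sigma^2(t_2))C_\lambda$ emerges under the single exponential factor.
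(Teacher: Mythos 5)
Your proposal follows essentially the same route as the paper's proof: the paper introduces the intermediate point $\hat{\mathbf{u}}^h(t_2,t_2)$ (the balanced reconstruction of $\buth$ from $\buth+\sigma(t_2)\beps_2$), splits via the triangle inequality into an initial-condition-mismatch term handled by Gr\"onwall/Lipschitz stability of the learned flow and a score-mismatch term handled by comparison with the Dirac-score reference flow together with the loss-residual identity (its Lemmas 1--2), and finishes with the Laurent--Massart $\chi^2_d$ tail bound --- exactly your ingredients, merely packaged as two successive comparisons rather than one combined Gr\"onwall estimate. The in-expectation-versus-pathwise gap you flag as the crux is real but is equally present in the paper's own argument, which states Lemma 2 in expectation yet invokes it pathwise in the theorem's proof.
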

As indicated by the above theorem, and noting that $\sigma(t)$ is a monotonically increasing function, minimizing the correction error requires keeping both $t_1$ and $t_2$ relatively small. Consequently, there exists a fundamental trade-off: larger values of $t_1$ and $t_2$ improve distribution alignment and bias correction, while smaller values better preserve the large-scale structure of the LFLR data. This trade-off is inherent in BPD, which uses a single time parameter $t_1 = t_2 = t$ and thus cannot balance these competing objectives. In contrast, IPD decouples the perturbation and denoising times, providing greater flexibility to mitigate this trade-off through the independent and appropriate selection of $t_1$ and $t_2$. 

\textbf{Theoretical Illustration with Gaussian Noise Example}. To analytically illustrate the advantage of IPD over BPD, we consider a toy example where the bias in the LFLR data is sampled from an isotrophic Gaussian distribution, $\be \sim \mN(\mz, \gamma^2 \mI)$, and we present the following proposition to highlight the benefit of using IPD.
\begin{proposition}\label{pro1}
    For a HFLR data $\buth \sim p(\buth)$, let $\bul = \buth + e$ be a LFLR data, where the bias term $e \sim p(e)=\mN(\mz, \gamma^2 \mI)$. Assuming that the approximated score function $S_{\theta}(\buth(t), t)$ is $L_s$-Lipschitz continuous in $\buth(t)$ for $t\in [0, 1]$. If the empirical training loss defined in equation \eqref{eqn:loss_dc} satisfies $L(\theta)<\delta$, then for any $0<t_1<t_2<1$ such that $\sigma^2(t_2) =\sigma^2(t_1) + \gamma^2$, the expected $L^2$ distance between $\buth$ and the correction $\buhat(t_1, t_2)$ is bounded by:
    \begin{equation}
        \mE_{\buth \sim p(\buth), \varepsilon \sim \mN(\mz, \mI), \be \sim p(\be)}(\| \buhat(t_1, t_2) - \buth \|^2_2) \leq e^{2L_st_2} \sigma^2(t_2) \delta.
    \end{equation}
\end{proposition}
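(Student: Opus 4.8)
The plan is to exploit the precise matching condition $\sigma^2(t_2) = \sigma^2(t_1) + \gamma^2$ to reduce the biased, imbalanced correction problem to an \emph{unbiased, balanced} denoising problem, for which only the score-matching error survives. First I would observe that the perturbed LFLR sample is $\bul(t_1) = \buth + \be + \sigma(t_1)\beps$, and that, conditionally on $\buth$, the two independent Gaussians combine as $\be + \sigma(t_1)\beps \sim \mN(\mz, (\gamma^2 + \sigma^2(t_1))\mI) = \mN(\mz, \sigma^2(t_2)\mI)$. Hence, jointly with $\buth$, one has the distributional identity $\bul(t_1) \stackrel{d}{=} \buth + \sigma(t_2)\tilde\beps$ with $\tilde\beps \sim \mN(\mz,\mI)$ independent of $\buth$. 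Taking expectations, this yields $\mE\,\| \buhat(t_1,t_2) - \buth \|_2^2 = \mE_{\buth, \tilde\beps}\, \| \text{ODEsolve}(\buth + \sigma(t_2)\tilde\beps,\, t_2,\, 0;\, S_\theta) - \buth \|_2^2$; that is, the correction is exactly the denoising of a \emph{correctly} perturbed HFLR sample at noise level $\sigma(t_2)$. This is the crux of the argument: compared with Theorem~\ref{thm}, the residual bias term $\|\be\|_2^2$ disappears because the bias is absorbed \emph{exactly} into the injected Gaussian noise, and the high-probability constant $C_\lambda$ disappears because we now average rather than bound with high probability.

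Next I would bound the reduced denoising error by comparing the learned-score probability-flow ODE against the exact single-point reference. For a fixed $\buth$, let $S^*(\bx, t) = -(\bx - \buth)/\sigma^2(t)$ be the score of the Gaussian $\mN(\buth, \sigma^2(t)\mI)$. Integrating the PF ODE \eqref{eqn:pfode_ori} with this linear field shows that its reverse trajectory starting from $\buth + \sigma(t_2)\tilde\beps$ at time $t_2$ is $\bx^*(t) = \buth + \sigma(t)\tilde\beps$, and therefore recovers $\bx^*(0) = \buth$ exactly since $\sigma(0)=0$. Moreover, along this reference one has the identity $\sigma(t)\, S_\theta(\bx^*(t), t) + \tilde\beps = \sigma(t)\big(S_\theta(\bx^*(t),t) - S^*(\bx^*(t),t)\big)$, so the integrand appearing in the training loss \eqref{eqn:loss_dc} is precisely the (rescaled) deviation of the learned field from the reference field, evaluated exactly at the points $\buth + \sigma(t)\tilde\beps$ that the loss samples.

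Finally I would invoke the Grönwall-type stability estimate for the PF ODE (the same estimate used in the proof of Theorem~\ref{thm}) to control the endpoint gap $\| \buhat(t_1,t_2) - \buth \|_2 = \| \hat\bx(0) - \bx^*(0) \|_2$ by $e^{L_s t_2}$ times the score error accumulated along $\bx^*$. Because that accumulated error, after squaring and taking the expectation over $(\buth, \tilde\beps)$, is the time integral over $[0,t_2]$ of the per-time training loss, it is bounded by the full loss $L(\theta) < \delta$; tracking the $\tfrac12\,\rd[\sigma^2(t)]/\rd t$ weighting in the PF ODE produces the prefactor $\sigma^2(t_2)$, and squaring the Grönwall factor gives $e^{2L_s t_2}$, which together yield $\mE\,\| \buhat(t_1,t_2) - \buth \|_2^2 \le e^{2L_s t_2}\sigma^2(t_2)\,\delta$.

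The main obstacle is the bookkeeping in this last step: converting the trajectory-wise score error into the single scalar $\delta$ requires relating the error, sampled only at times $s \in [0,t_2]$ along the specific reference path, to the loss averaged over $s \sim U(0,1)$ and over all data. I would handle this with a Cauchy--Schwarz estimate against the weight $\sigma'(s)\,\rd s$ (using $\sigma(0)=0$ to extract the $\sigma^2(t_2)$ factor) together with the observation that the reference trajectory threads exactly through the distribution the loss samples. Equivalently, the whole argument can be phrased as applying the internal estimate from the proof of Theorem~\ref{thm} to the reduced problem with zero bias and matched perturbation level, in which the $\|\be\|_2^2$ and $C_\lambda$ contributions are identically absent.
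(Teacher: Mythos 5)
Your proposal is correct and follows essentially the same route as the paper: the key reduction $\be + \sigma(t_1)\beps \sim \mN(\mz,\sigma^2(t_2)\mI)$ turning the biased imbalanced correction into a balanced reconstruction at noise level $\sigma(t_2)$ is exactly the paper's proof of Proposition~\ref{pro1}, and your second step (Gr\"onwall stability of the PF ODE against the single-point reference score $-(\bx-\buth)/\sigma^2(t)$, plus Cauchy--Schwarz to extract $\sigma^2(t_2)\delta$) is precisely the paper's Lemma~\ref{lemma2} built on Lemma~\ref{lemma1}. No substantive differences.
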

As previously noted, BPD corresponds to the special case of IPD with $t_1 = t_2 = t$, where the goal is to select a sufficiently large $t$ such that $\sigma^2(t)>>\gamma^2$, allowing the injected noise to dominate and effectively ``screen out" the bias.  According to Proposition~\ref{pro1}, the upper bound for the expected correction error is $e^{2L_st_2} \sigma^2(t_2) \delta$, where $\sigma^2(t)>>\gamma^2$. In contrast, by selecting $t_1=0$ and $t_2=\sigma^{-1}(\gamma)$ using IPD, the Proposition~\ref{pro1} implies a much smaller upper bound, $e^{2L_s \sigma^{-1}(\gamma)} \gamma^2 \delta$. Moreover, when the training loss approaches zero, this bound vanishes, indicating that Gaussian-type bias can be completely removed. While the benefits of IPD is only shown rigorously in the simple setting where the biases are Gaussian white noises, our numerical results in Section \ref{sec:numerical} demonstrate the superiority of IPD in a wide range of biases, such as numerical errors and various kinds of noises.

\textbf{Selection of $t_1$ and $t_2$.}
After demonstrating the theoretical advantage of IPD in the Gaussian noise setting, we now address the practical selection of $t_1$ and $t_2$ for robust correction in more general bias scenarios. Our key idea is to search for the smallest pair $t_1$ and $t_2$ such that $p(\bul(t_1))$ align closely with $p(\buth(t_2))$ under certain metrics. There are two critical factors in this procedure: first, the choice of the metric $\mathcal{M}$ used to evaluate distributional alignment; second, to preserve large-scale structures, both $t_1$ and $t_2$ are constrained to lie within the interval $[0, T_e]$ for a suitably chosen $T_e$. Both the metric $\mathcal{M}$ and the terminal searching time $T_e$ are selected empirically, and the criteria for choosing $\mathcal{M}$ will be discussed in detail in Section~\ref{sec:1d_adv}. The complete procedure for selecting $t_1$ and $t_2$ is detailed in Algorithm~\ref{alg:t1t2}. For completeness, we also include Algorithm~\ref{alg:t} in the Appendix~\ref{sec:supp_alg}, which provides an analogous approach for determining the optimal $t$ in the BPD setting.

\begin{algorithm}
\caption{Selection of $t_1$ and $t_2$}
\label{alg:t1t2}
\begin{algorithmic}[1]
\REQUIRE A HFLR dataset $\{ \buth_i \}_{i=1}^N$ and a LFLR datasets $\{ \bul_j \}_{j=1}^M$, terminal searching time $T_e$, number of steps $N_{t_1}$ and $N_{t_2}$, two scalars $1<c_1<c_2$, and a metric $\mathcal{M}$.
\STATE Initialize $d_{\min} \gets \infty$
\STATE Initialize $t^{*}_1 \gets 0$, $t^{*}_2 \gets 0$
\FOR{$p \gets 0$ to $N_{t_1}-1$}
    \STATE $t_1 \gets p \cdot \frac{T_e}{N_{t_1}-1}$
    \FOR{$q \gets 0$ to $N_{t_2}-1$}
    \STATE $t_2 \gets c_1 t_1 + q \cdot \frac{c_2-c_1}{N_{t_2}-1} t_1$
    \STATE Obtained perturbed $\{ \bul_j (t_1) \}_{j=1}^M$ via SDE \eqref{eqn:diff} for time $t_1$
    \STATE Obtained perturbed $\{ \buth_i (t_2) \}_{i=1}^N$ via SDE \eqref{eqn:diff} for time $t_2$
    \STATE Compute $d_i = \mathcal{M}(\{ \bul_j (t_1) \}_{j=1}^M, \{ \buth_i (t_2) \}_{i=1}^N)$
    \IF{$d_i < d_{\min}$}
        \STATE $d_{\min} \gets d_i$
        \STATE $t^{*}_1 \gets t_1$, $t^{*}_2 \gets t_2$
    \ENDIF
    \ENDFOR
\ENDFOR
\RETURN Optimal $t^{*}_1$ and $t^{*}_2$
\end{algorithmic}
\end{algorithm}

\textbf{Enhanced SDEdit with IPD.} The full procedure for correcting the LFLR dataset using Enhanced SDEdit with IPD is summarized as follows. Given a HFLR dataset $\{ \buth_i \}_{i=1}^N$, an unconditional diffusion model is first pre-trained. The optimal perturbation and denoising times $t_1^*$ and $t_2^*$ are then selected using Algorithm~\ref{alg:t1t2}. Subsequently, the LFLR dataset $\{ \bul_j \}_{j=1}^M$ is corrected by applying perturbation and denoising via the SDE~\eqref{eqn:diff} using $t_1^*$ and $t_2^*$ respectively, resulting the correted dataset $\{ \hat{\mathbf{u}}^{l,*}_j\}_{j=1}^M$. The complete procedure is detailed in Algorithm~\ref{alg:dc}.
\begin{algorithm}[h]
\caption{Diffusion-based Correction}
\label{alg:dc}
\begin{algorithmic}[1]
\REQUIRE A HFLR dataset $\{ \buth_i \}_{i=1}^N$ and a LFLR datasets $\{ \bul_j \}_{j=1}^M$, terminal searching time $T_e$, number of steps $N_{t_1}$ and $N_{t_2}$, two scalars $1<c_1<c_2$, a metric $\mathcal{M}$ and a well trained model $S_{\theta}$.
\STATE Obtain $t^{*}_1$ and $t^{*}_2$ from Algorithm~\ref{alg:t1t2}
\FOR{$j \gets 1$ to $M$}
\STATE Obtain perturbed $\bul_j (t^{*}_1)$ via SDE \eqref{eqn:diff} for time $t^*_1$
\STATE Obtain correction $\hat{\mathbf{u}}^{l,*}_j := \buhat_j (t^{*}_1, t^{*}_2) = \text{ODEsolve} (\bul_j (t^{*}_1), t^{*}_2, 0; S_{\theta})$.
\ENDFOR
\RETURN Corrected dataset $\{ \hat{\mathbf{u}}^{l,*}_j\}_{j=1}^M$
\end{algorithmic}
\end{algorithm}

\subsection{Super-resolution via Iterative Refinement}\label{sec:sr}
In the SR step, we employ the cascaded SR3 model \cite{saharia2022image} to refine the resolution of the corrected LF data. Since the restriction operator $\mathcal{R}$ is a user-specified and known, a paired dataset of HFLR and HFHR data, $\{ \buth_i, \bu_i^h \}_{i=1}^N$, can be constructed from a given HFHR dataset $\{\bu_i^h \}_{i=1}^N$ by applying $\mathcal{R}$ to each HFHR sample. This paired dataset enables the training of a conditional diffusion model $S_{\xi}(\buh(t), \buth, t)$ to approximate the conditional distribution $P(\buh | \buth)$ by minimizing the following loss function:
\begin{equation}\label{eqn:loss_sr} 
L(\xi):= \mathbb{E}_{t \sim U(0, 1), \buh \sim P(\buh), \beps \sim \mN(\mz, \mI)} [\| \sigma(t) S_{\xi}(\buh(t), \buth, t) + \beps \|_2^2],
\end{equation}
The complete training procedure for this conditional diffusion model $S_{\xi}(\buh(t), \buth, t)$ is a straightforward extension of the unconditional case and the details are presented in Algorithm~\ref{alg:uh} in the Appendix~\ref{sec:supp_alg}. Once the model is trained, a HFHR data $\buh$ corresponding to the given HFLR data $\buth$ can be generated using the EM method adapted to the conditional setting. The generating process starts with $\buh(1) \sim \mN(\mz, \sigma^2(1) \mI)$ at $t=1$ and iteratively updated using following rule:
\begin{equation}\label{eqn:EM_cond}
\buh(t-\Delta t) = \buh(t) + (\sigma^2(t-\Delta t) - \sigma^2(t)) S_{\xi}(\buh(t), \buth, t) + \sqrt{\sigma^2(t) - \sigma^2(t-\Delta t)} \boldsymbol{z}, \quad \boldsymbol{z} \sim \mN(\mz, \mI).
\end{equation}
until $t=0$, and we let $\buh:=\buh(0)$. As discussed in \cite{saharia2022image}, the super-resolution task with a large magnification factor can be split into a sequence of SR tasks with smaller magnification factors. This approach enables parallel training of simpler models, each requiring fewer parameters and less training effort.

\subsection{Diffusion-Based Correction and Super-resolution}\label{sec:dcsr}
In this part, we detail the two-stage pipeline of the proposed DCSR method. First, an enhanced SDEdit with IPD removes the bias from the LFLR data.  Second, a cascaded SR3 network progressively upsamples the corrected output back to the target high resolution. To better illustrate the pipeline, we consider a setting where the HFHR dataset $\{ \buh_i \}_{i=1}^N$ consists of $N$ samples at a resolution of $256 \times 256$, while the LFLR dataset $\{ \bul_j \}_{j=1}^M$ consists of $M$ samples at a resolution of $32 \times 32$. This setup requires bias correction at the low-resolution  and an upsampling by a magnification factor of $8$. Following the strategy used in cascaded SR3~\cite{saharia2022image}, we decompose the overall SR task into three smaller SR tasks, each with a magnification factor of $2$.

Specifically, we define three restriction operators, $\mathcal{R}_1$, $\mathcal{R}_2$, and $\mathcal{R}$, which downsample the HFHR dataset $\{ \buh_i \}_{i=1}^N$ to resolutions of $128 \times 128$, $64 \times 64$, and $32 \times 32$, respectively, resulting in three paired datasets $\{ \buth_{128,i}, \buh_i \}_{i=1}^N$, $\{ \buth_{64,i}, \buth_{128,i} \}_{i=1}^N$ and $\{ \buth_{i}, \buth_{64,i} \}_{i=1}^N$. We then train three separate conditional diffusion models, denoted by $S_{\xi_1}$, $S_{\xi_2}$, and $S_{\xi_3}$, each using one of these datasets in parallel. These models are subsequently chained together to perform cascaded super-resolution, progressively refining the LR data from $32 \times 32$ back to the original $256 \times 256$ resolution. The training procedure for each SR model follows the steps outlined in Algorithm~\ref{alg:uh}. In addition, we train an unconditional diffusion model on the HFLR dataset $\{ \buth_i \}_{i=1}^N$, which is used to perform bias correction at the LR level. At inference step, given a LFLR dataset $\{ \bul_j \}_{j=1}^M$, select a metric $\mathcal{M}$, compute the optimal $t_1^*, t_2^*$ using Algorithm~\ref{alg:t1t2} with two datasets $\{ \bul_j \}_{j=1}^M$ and $\{ \buth_{i} \}_{i=1}^N$. The choice of $\mathcal{M}$ is critical and will be discussed in Section~\ref{sec:1d_adv}, as it is determined empirically. The LFLR data is then corrected using Algorithm~\ref{alg:dc}, producing a corrected dataset $\{ \hat{\mathbf{u}}^{l,*}_j \}_{j=1}^M$. Finally, the corrected dataset is refined using the cascaded SR3 ($S_{\xi_1}$, $S_{\xi_2}$ and $S_{\xi_3}$), resulting the final enhanced dataset, denoted as $ \{ \hat{\mathbf{u}}^h_j \}_{j=1}^M$. The entire procedure of DCSR is summarized in Algorithm~\ref{alg:dcsr}.

\begin{algorithm}[H]
\caption{Diffusion-based correction and super-resolution}
\label{alg:dcsr}
\begin{algorithmic}[1]
\REQUIRE Any LFLR dataset $\{ \bul_j \}_{j=1}^M$, a sufficiently large HFHR dataset $ \{ \buh_i \}_{i=1}^N$, noise scheduling function $\sigma(t)$, metric $\mathcal{M}$, three user-specific restriction operators $\mathcal{R}_1$, $\mathcal{R}_2$ and $\mathcal{R}$, batch size $B$ and max iteration $Iter$. 
\STATE {\textbf{Data preparation stage}:}
\STATE Generate three lower resolution datasets from HFHR dataset $ \{ \buh_i \}_{i=1}^N$ using $\mathcal{R}_1$, $\mathcal{R}_2$ and $\mathcal{R}$ respectively, resulting in $\{ \buth_{128, i} \}_{i=1}^N$, $\{ \buth_{64, i} \}_{i=1}^N$ and $\{ \buth_{i} \}_{i=1}^N$
\STATE {\textbf{Training stage}:}
\STATE Train SR models $S_{\xi_1}$, $S_{\xi_2}$ and $S_{\xi_3}$ using dataset $\{ \tilde{\mathbf{u}}_{128,i}^h, \buh_i \}_{i=1}^N$, $\{ \tilde{\mathbf{u}}_{64,i}^h, \buth_{128,i} \}_{i=1}^N$ and $\{ \tilde{\mathbf{u}}_{32,i}^h, \buth_{64,i} \}_{i=1}^N$ by Algorithm~\ref{alg:uh}
\STATE Train a diffusion model $S_{\theta}$ on empirical distribution $\{ \buth_{i} \}_{i=1}^N$ by running Algorithm~\ref{alg:utilde}.
\STATE {\textbf{Inference stage}:}
\STATE Obtain the corrected dataset $ \{ \hat{\mathbf{u}}^{l,*}_j \}_{j=1}^M$ at LR by Algorithm~\ref{alg:dc} 
\STATE Generate HR corrected dataset $ \{ \hat{\mathbf{u}}^h_j \}_{j=1}^M$ using cascaded conditional diffusion model $S_{\xi_1}$, $S_{\xi_2}$ and $S_{\xi_3}$ with EM method \eqref{eqn:EM_cond} iteratively.
\RETURN HR corrected dataset $ \{ \hat{\mathbf{u}}^h_j \}_{j=1}^M$
\end{algorithmic}
\end{algorithm}

\section{Numerical Experiments}\label{sec:numerical}
In this section, we present numerical examples to demonstrate the effectiveness of the proposed DCSR model in enhancing the fidelity of various LFLR datasets. We begin with a 1D advection equation to demonstrate the capability of IPD in eliminating various types of numerical errors introduced by different solvers, as well as noise-induced artifacts. In parallel, we also empirically investigate the choice of metric $\mathcal{M}$ in Algorithm~\ref{alg:dcsr} for determining the optimal values of $t^*_1$ and $t^*_2$. Next, we evaluate DCSR for enhancing the fidelity of data arising from fluid dynamics and elastic mechanics. In these cases, HFHR data are generated using reference numerical solvers on fine grid and LFLR data are generated using various numerical solvers on coarse grids. Details of the dataset generation procedures are provided in the respective subsections. Finally, we test DCSR on a real climate dataset, where the LFLR data is obtained by corrupting the HFLR data by various sources of noise.

\textbf{Baseline Methods.} The baseline methods used in the ablation study for the 1D example are described in Subsection~\ref{sec:1d_adv}. Here, we focus on a different set of baselines relevant to the PDE and climate settings. As discussed earlier, supervised learning frameworks such as neural operators are not applicable in this context due to the absence of paired datasets or known governing equations. Although methods based on OT and DDIB address the challenge of unpaired data, they are typically task-specific and do not generalize to unseen correction tasks. In this section, we consider the following baselines for comparison:  (1) $\text{LFLR+Interp}$, which upsamples the LFLR data using bicubic interpolation; (2) $\text{LFLR+SR}$, which upsamples the LFLR data using a pretrained cascaded SR3 model; (3) DCSR, our proposed method; and (4) the reference HFHR data.



\textbf{Metrics}. We introduce the metrics used in this paper to quantify the distance between two empirical distributions. These metrics will be used for selecting the optimal $t_1$ and $t_2$ in Algorithm~\ref{alg:t1t2} and for evaluating the quality of DCSR predictions compared to the reference HFHR data. We begin with the energy spectrum, which measures the energy of a function $f(\bx)$ in each Fourier mode and is defined as 
\[
E_f(k)  = \sum_{| \boldsymbol{k} | = k} \left| \sum_{n} f(\bx_n) \exp \left( -\mathbf{j} 2 \pi \boldsymbol{k} \cdot \bx_n / L \right) \right|^2
\]
where the $\bx_n$ denotes the grid points, the bold notation $\mathbf{j}$ denotes the imaginary number (not to be confused with the index subscript), and $k$ is the magnitude of the wave number $\boldsymbol{k}$. To quantify the overall alignment of two empirical distributions $\{ \bu_i \}_{i=1}^N$ and $\{ \bv_j \}_{j=1}^M$, we denote the average energy spectrum of two distributions by $E_\bu(k) = \frac{1}{N} \sum_{i} E_{\bu_i}(k)$ and $E_\bv(k) = \frac{1}{M} \sum_{j} E_{\bv_j}(k)$, and consider the mean energy log ratio (MELR):
\[
\text{MELR}(\bu, \bv)= \sum_k w_k \left| \log \left( \frac{E_\bu(k)}{E_\bv(k)} \right) \right|
\] 
Where $w_k$ is the user-specific weight function. If $w_k=1$ for all $k$, we refer it as the unweighted mean energy log ratio (MELRu). If $w_k=\frac{E_\bv(k)}{\sum_{j} E_\bv(j)}$, we refer it as the weighted mean energy log ratio (MELRw). Additional metrics used in this paper, including the Maximum Mean Discrepancy (MMD), Relative Mean Square Error (RMSE), Wasserstein-2 distance ($\mWa$) and Total Variation Distance (TVD), are provided in the Section~\ref{sec:metrics} in Appendix.

\textbf{Experiments Setup}. Denote the HFHR training dataset by $\{ \buh_i \}_{i=1}^N$, where $N$ is the size of the dataset. Bicubic interpolation is used for all restriction operators $\mathcal{R}_1$, $\mathcal{R}_2$ and $\mathcal{R}$. As stated in \ref{sec:dcsr}, four datasets will be generated after applying the restriction operators onto the HFHR dataset. The three paired datasets $\{ \buth_{128,i}, \buh_i \}_{i=1}^N$, $\{ \buth_{64,i}, \buth_{128,i} \}_{i=1}^N$ and $\{ \buth_{i}, \buth_{64,i} \}_{i=1}^N$ are used to train the cascaded conditional diffusion models $S_{\xi_1}$, $S_{\xi_2}$ and $S_{\xi_3}$ respectively. The HFLR dataset $\{ \buth_i \}_{i=1}^N$ is used to train the unconditional diffusion model $\mathcal{S}_{\theta}$ for performing bias correction at the LR level. We adopt the UNet architecture as described in \cite{song2020score} for all these four diffusion models and the details on the selection of the noise scheduling function and the hyperparameters can be found in the Appendix. In Algorithm~\ref{alg:dc}, the ODE solver (ODEsolve in line 4) employs the RK45 scheme with stopping criteria of $rtol=1e-5$ and $atol=1e-5$. In Algorithm~\ref{alg:dcsr}, the EM method in SR step (line 2) use total step $N_T=1000$. The selection of hyperparameters and the metric $\mathcal{M}$ in Algorithm~\ref{alg:t1t2} is non-trivial and is guided by the experimental results presented in Subsection~\ref{sec:1d_adv}. The selection of other hyperparamters is presented in Appendix.

\subsection{1D Advection}\label{sec:1d_adv}
In this example, we demonstrate the effectiveness of our enhanced SDEdit method with IPD in mitigating both artificial numerical errors introduced by numerical solvers and various noise corruptions. Consider the following toy problem:
\begin{equation}\label{eqn:adv}
\begin{cases} 
\partial_t u + v \partial_x u = 0, ~ ~x \in [0,1], \\ 
u(t, 0) = u(t, 1), \\ 
u(0, x) = u_0(x).
\end{cases}
\end{equation}
where and velocity is set to be $v=0.1$ and the initial conditions are defined as follow:
\[
u_0 (x) = \sum_{i=1}^K \chi_{[a_i, b_i]}(x),
\]
where:
\[
\chi_{[a_i, b_i]}(x) =
\begin{cases}
1, & \text{if } x \in [a_i, b_i] \subset [0, 1], \\
0, & \text{otherwise}.
\end{cases}
\]
and $K$ is a random variable that is uniformly distributed over the integers set $\{1, 2, 3\}$. We randomly sample $N=2,000$ initial conditions for training and $M=100$ for testing. To demonstrate the effectiveness of the DCSR model, we ensure that both the LF and HF solutions share the same spatial resolution, using a uniform mesh grid with $\Delta x = 1/100$. The training and test datasets consist of solutions of \eqref{eqn:adv} at $T=0.25$. The analytical solution to \eqref{eqn:adv}, which is accessible, serves as the HF data. To generate LF solutions, we set the time step size $\Delta t = 0.001$ and evolve the initial conditions to $T=0.25$ using three numerical schemes: the Godunov scheme (God), the Lax-Wendroff scheme (LW), and the Fourier spectral method (FFT). Additionally, we generate three more LF solutions by perturbing the HF solutions with three types of noise: White, Pink, and Brown noise, defined as:
\begin{equation}\label{eqn:noise}
  C_r \mathcal{F}^{-1} (\frac{\mathcal{F}(\beps)}{|\boldsymbol{k}|^{r/2}}),~\beps \sim \mN(\mz, \mI)  
\end{equation}
where $\mathcal{F}$ and $\mathcal{F}^{-1}$ are Fourier transform and inverse Fourier transform, and $\boldsymbol{k}$ is the wave number. Scalar $C_r$ denotes the magnitude of the noise and parameter $r$ determines the noise type: $r=0$ for White noise, $r=1$ for Pink noise and $r=2$ for Brown noise. Typically in this part, we let $C_r=0.1$.

We begin with an ablation study of three factors: 1. Comparison between IPD and BPD; 2. The effect of the metric $\mathcal{M}$ used to evaluate the distance between two empirical distributions in Algorithm~\ref{alg:dc}, specifically comparing MMD, MELRu, MELRw, and W2 ; 3. Impact of search ending time $T_e$ in Algorithm~\ref{alg:dc}. The study is conducted on all six types of LF data. The comparison of IPD and BPD performance across metrics and search ending times for the low-fidelity solution generated by the LW solver and corrupted with pink noise presented in Figure~\ref{fig:ablation}. Additional results for other cases are provided in the Appendix.
\begin{figure}[h]
    \centering
    \includegraphics[width=1\linewidth]{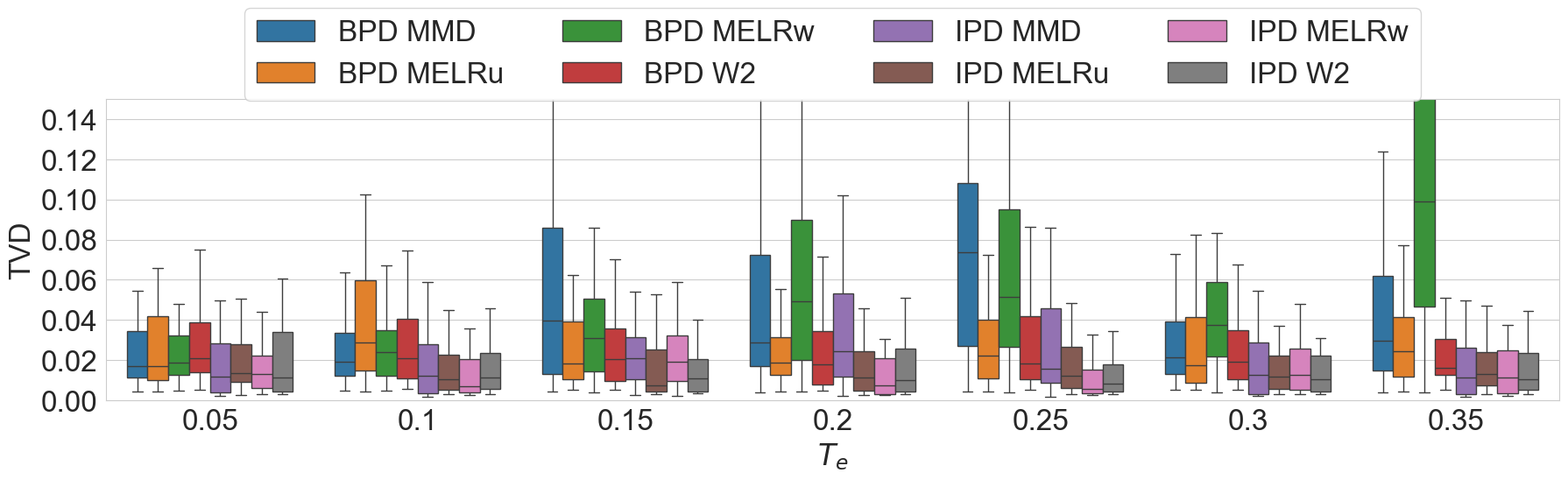}
    \includegraphics[width=1\linewidth]{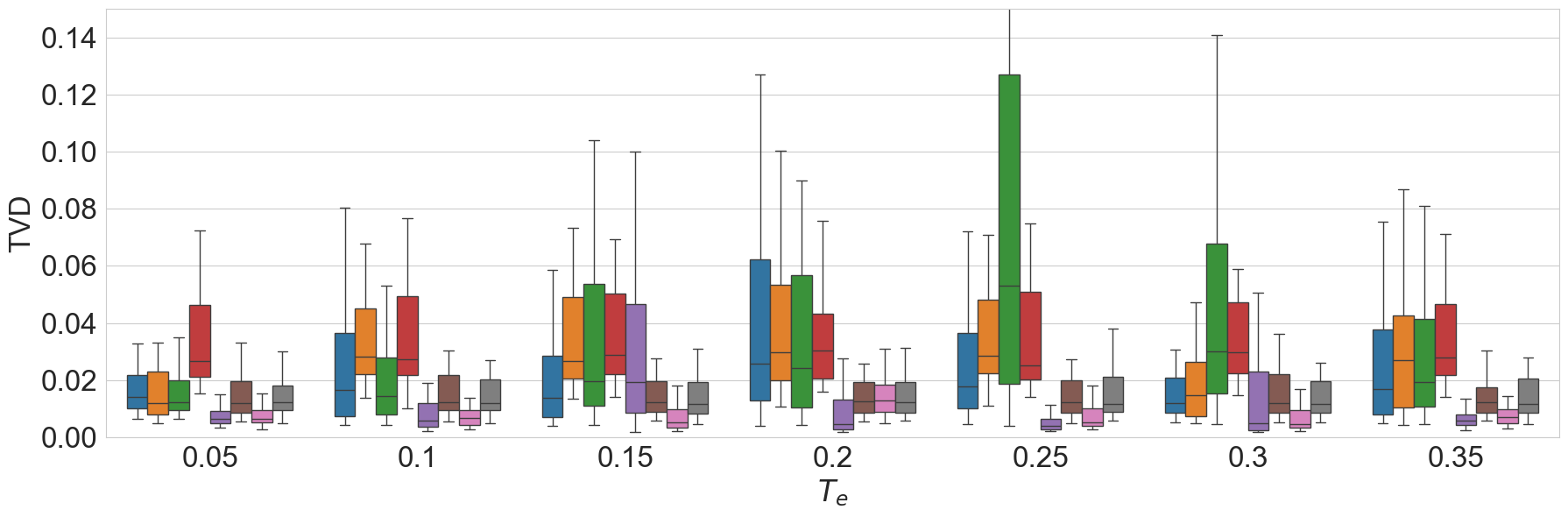}
    \caption{Box plot of comparison the TVD between correction obtained by Algotirhm~\ref{alg:dc} under different settings and reference. The settings include IPD and BPD, search ending times ($T_e$) and four metrics $\mathcal{M}$: MMD, MELRu, MELRw, and $\mWa$.}
    \label{fig:ablation}
\end{figure}
In Figure~\ref{fig:ablation}, the best overall settings can be identified by evaluating both the median TVD and the variability represented by the height of the boxes. Lower medians indicate better overall alignment between predictions and reference, and smaller box heights implies greater robustness. Notably, for the same metric and search ending time $T_e$, the IPD always outperform BPD, offering lower median TVD errors and more robust results (narrower boxes). Among the metrics $\mathcal{M}$,  MELRw using IPD stands out. Specifically, when the search ending time is set to $T_e=0.2$, it achieves the optimal balance between accuracy and robustness. Consequently, IPD with MELRw metric and $T_e=0.2$ is adopted in Algorithm~\ref{alg:dc} for the rest numerical examples. 

To further highlight the differences, Figure~\ref{fig:ipd_comp} directly compares the correction performance of BPD and IPD. The results show that BPD struggles to preserve the large-scale structure in some cases, indicating its lack of robustness. In contrast, the IPD corrections align more closely with the reference HF data. Several additional correction instances using the  setting above are presented in Figure~\ref{fig:1d_example}.

\begin{figure}
    \centering
    \includegraphics[width=0.99\linewidth]{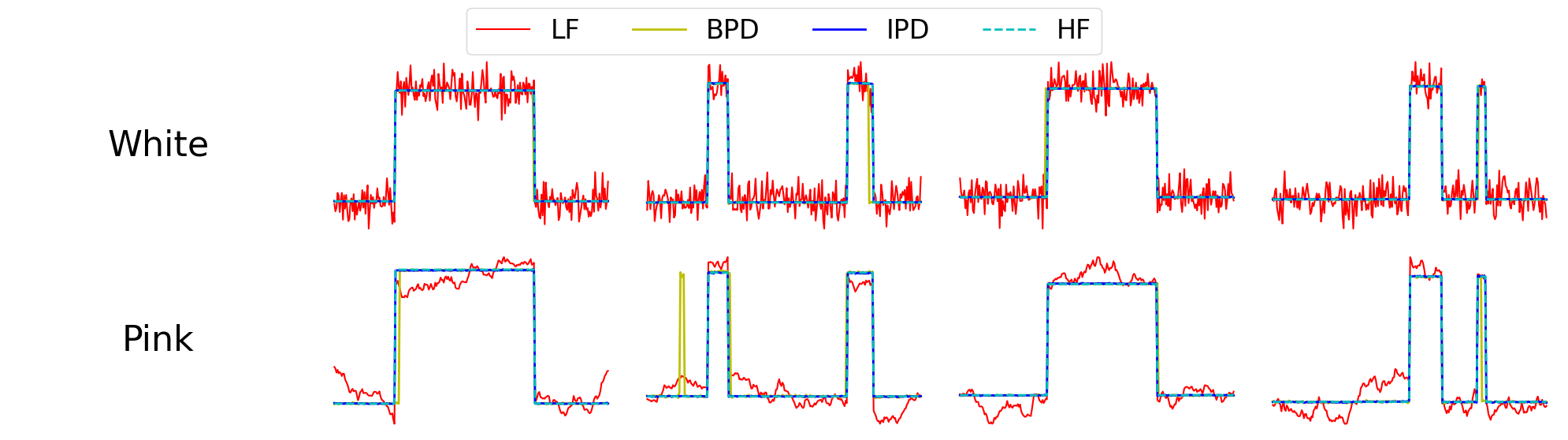}
    \caption{Comparison of correction obtained by Algorithm~\ref{alg:dc} using BPD (select $t^*$ by Algorithm~\ref{alg:t}) and IPD (select $t_1^*, t_2^*$ by Algorithm~\ref{alg:t1t2}) with MELRw metric and $T_e=0.2$ for LF data polluted with White and Pink noise. The red line represents the LF data, the yellow line corresponds to the correction using BPD, the blue line represents the correction using IPD, and the dashed green line shows the reference HF data.}
    \label{fig:ipd_comp}
\end{figure}

\begin{figure}[h]
    \centering
    \includegraphics[width=0.99\linewidth]{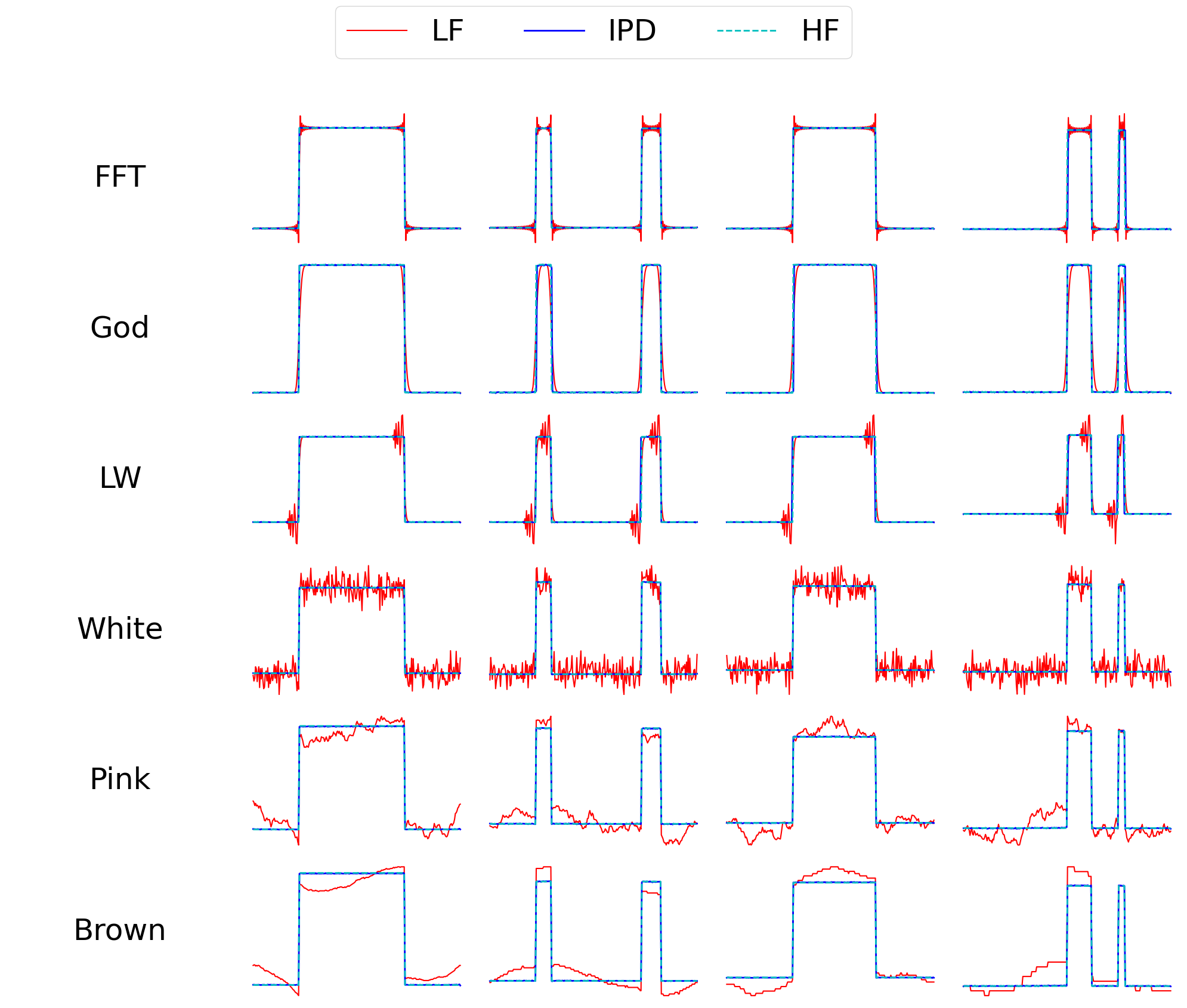}
    \caption{Corrections using IPD with the MELRw metric and $T_e=0.2$. The top three rows display corrections of LF solutions produced by three numerical solvers: the Fourier spectral method, Godunov scheme, and Lax-Wendroff scheme. The bottom three rows show corrections of LF data polluted by White, Pink, and Brown noise, respectively. The red lines represent the LF data, the blue lines show the correction using IPD, and the dashed cyan lines indicate the reference HF data.}
    \label{fig:1d_example}
\end{figure}

\newpage

\subsection{Linear Elasticity}
In this section, we discuss linear elasticity in computational micromechanics by analyzing deformation patterns in heterogeneous materials under external loadings. Specifically, we consider the strain field as the quantity of interest, which is obtained by solving the Lippmann-Schwinger equation \cite{LSequation} in a 2D fiber reinforced composite representative volume element (RVE) domain $\Omega \subset \mathbb{R}^2$ with spatial coordinates $\mathbf{x}=(x_1, x_2)$. The problem can be formulated as:
\begin{equation}
\begin{dcases*}
\varepsilon_{i j}(\mathbf{x})+\int_{\Omega} G_{i j k l}^{(0)}\left(\mathbf{x}, \mathbf{x}^{\prime}\right): \tau_{k l}\left(\mathbf{x}^{\prime}\right) \mathrm{d} \mathbf{x}^{\prime}-\bar{\varepsilon}_{i j}=0, \\
\tau_{i j}\left(\mathbf{x}\right)=\sigma_{i j}(\mathbf{x})-C_{i j k l}^{0}:\varepsilon_{k l}(\mathbf{x}), \\
\sigma_{i j}(\mathbf{x})=C_{i j k l}(\mathbf{x}): \varepsilon_{k l}(\mathbf{x}). \\
\end{dcases*}
\label{setup:balanceequation}
\end{equation}
The above relations are written in component-wise form ($ 1\leq i, j, k, l \leq 2$) using Voigt notation, which represent equilibrium and constitutive equations. Here, $\sigma_{i j}, \varepsilon_{i j}$ and $C_{i j k l}$ denote stress, strain and fourth-order elastic tensor, respectively. 
The elastic tensor $C_{i j k l}(\mathbf{x})$ is expressed in terms of Lamé constants $\lambda(\mathbf{x})$ and $\mu(\mathbf{x})$, such as:
\begin{equation}
C_{i j k l}(\mathbf{x})=\lambda(\mathbf{x}) \delta_{i j} \delta_{k l}+\mu(\mathbf{x})\left(\delta_{i k} \delta_{j l}+\delta_{i l} \delta_{j k}\right),
\label{setup:Lametransform}
\end{equation}
where $\delta_{i j}$ is the notation of Kronecker delta function. The selection of $\lambda(\mathbf{x})$ and $\mu(\mathbf{x})$ depends on the material type at  $\mathbf{x}$, i.e., whether it corresponds to the matrix or the fiber. $\tau_{i j}$ is defined as polarization stress, which denotes the difference between the real stress and the stress in the homogeneous reference material under the same strain. $G_{i j k l}^{(0)}$ denotes Green's operator corresponding to the reference material with elastic tensor $C_{i j k l}^{0}$. The explicit formula of $G_{i j k l}^{(0)}$ in Fourier space is demonstrated in reference \cite{WangLiuMicrometer}. $\bar{\varepsilon}_{i j}$ denotes far field strain, which is equal to average strain inside the RVE domain $\Omega$.

The first Lamé constants $\lambda$ of fiber and matrix materials are set to 4.85 GPa and 13.330 GPa, respectively, while the second Lamé constants $\mu$ are set to 1.368 GPa and 7.500 GPa, respectively. The squared RVE domain size is set as 50$\mu \text{m}$ $\times$ 50$\mu \text{m}$, where the volume fraction and radius of fiber are set as 40\% and 3.5$\mu \text{m}$, respectively. The RVE microstructures are generated using \texttt{rvesimulator} \cite{yi2023rvesimulator} according to random fiber generation algorithm $\texttt{RAND{\_}uSTRU{\_}GEN}$ proposed by Melro et al \cite{Melrogenerate2008}. 

To solve Eq.\eqref{setup:balanceequation}, macrostrain-controlled boundary condition is applied to the RVE, such as $\overline{\boldsymbol{\varepsilon}} = [1,0,0]^T$, and the solution is subsequently obtained using the fast Fourier transform (FFT)-based homogenization method with a fixed-point iteration solver \cite{Moulinec1998FFT}. The RVE domain is discretized with \(256 \times 256\) and  \(32 \times 32\) uniform pixels for high and low fidelity cases, respectively. More specifically, the high fidelity results are obtained using discrete Green's operator (DGO) for $G_{i j k l}^{(0)}$, and corresponding convergence criterion for fixed-point iteration solver is set to \(tol = 10^{-6}\). The low fidelity results are obtained using continuous Green's operator (CGO) for $G_{i j k l}^{(0)}$, as such an operator brings spurious oscillations and ringing artifacts, and corresponding convergence criterion for fixed-point iteration solver is set to \(tol = 1.5 \times 10^{-2}\).


\begin{figure}
    \centering
    \includegraphics[width=0.55\linewidth]{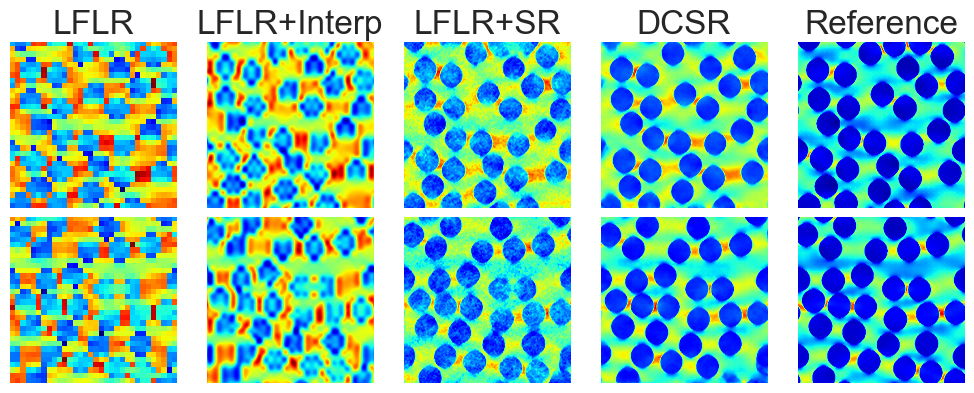}
    \includegraphics[width=0.38\linewidth]{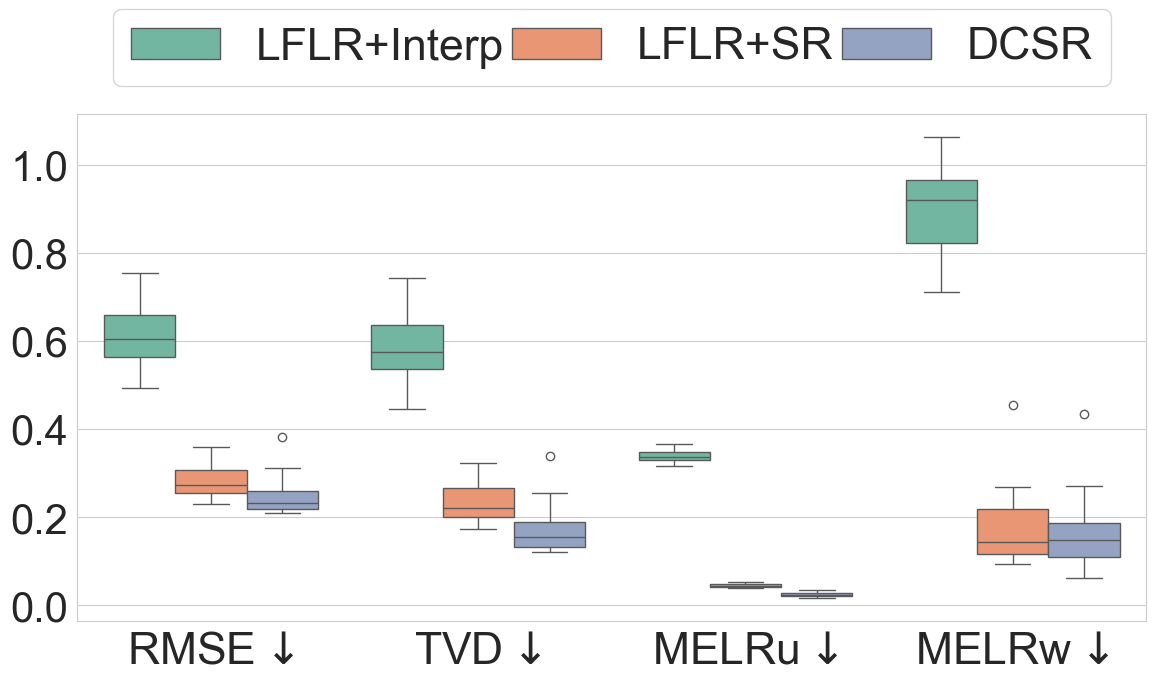}
    
    \includegraphics[width=0.55\linewidth]{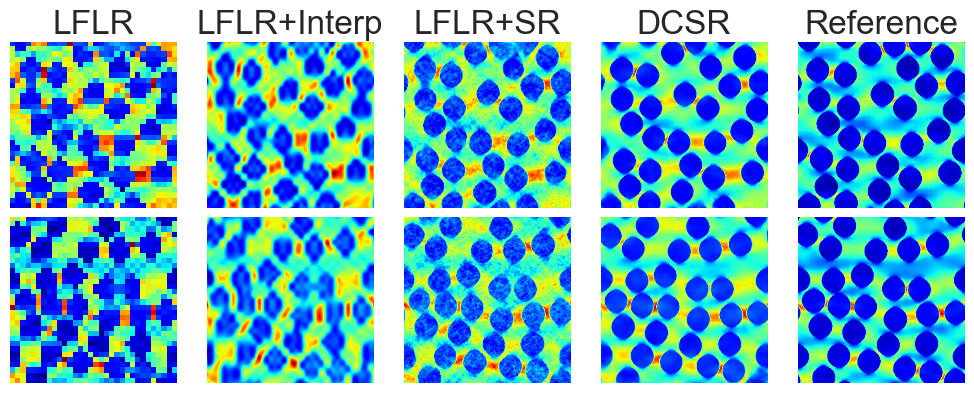}
    \hspace{0.01\linewidth}\includegraphics[width=0.37\linewidth]{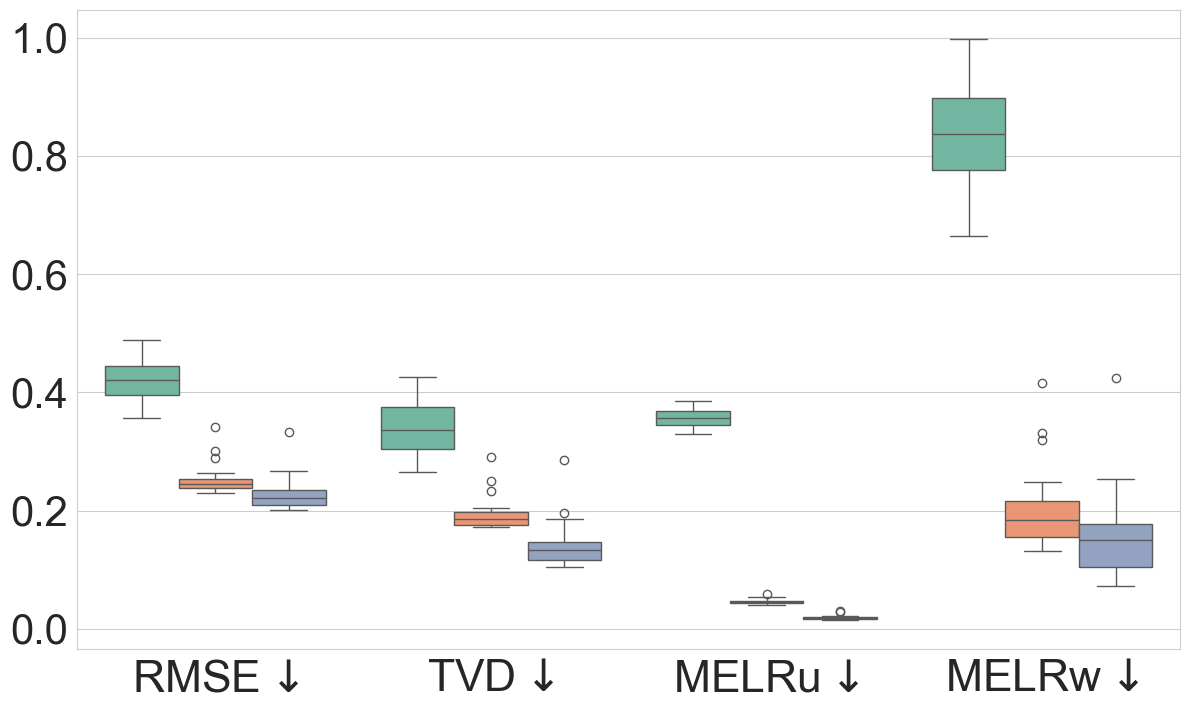}
    
    \caption{Strain field $\boldsymbol{\varepsilon}_{11}(\mathbf{x})$ distribution inside a RVE microstructure. The top row shows the results of low-fidelity data obtained using the continuous Green's operator, while the bottom row shows results of low-fidelity data obtained using the discrete Green's operator. The left columns compare low-fidelity data with results from various refinement methods. The right columns present boxplots comparing performance of various refinement methods across metrics.}
    \label{fig:ELAS_0}
\end{figure}

\begin{figure}
    \centering
    \includegraphics[width=0.55\linewidth]{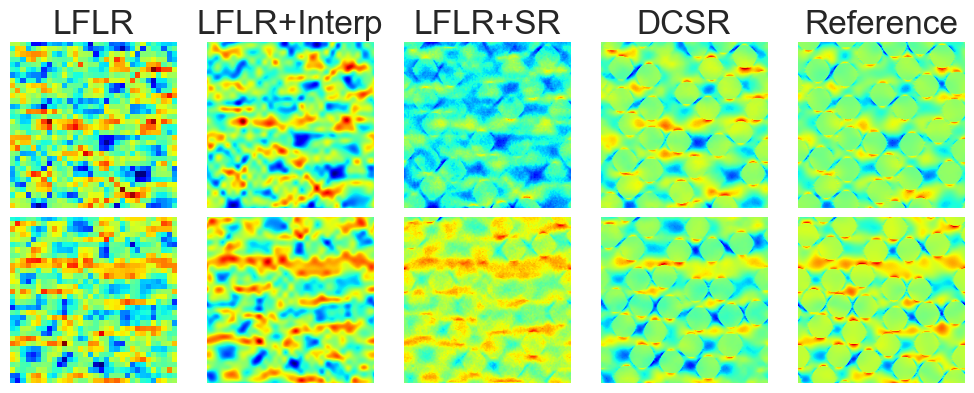}
    \includegraphics[width=0.38\linewidth]{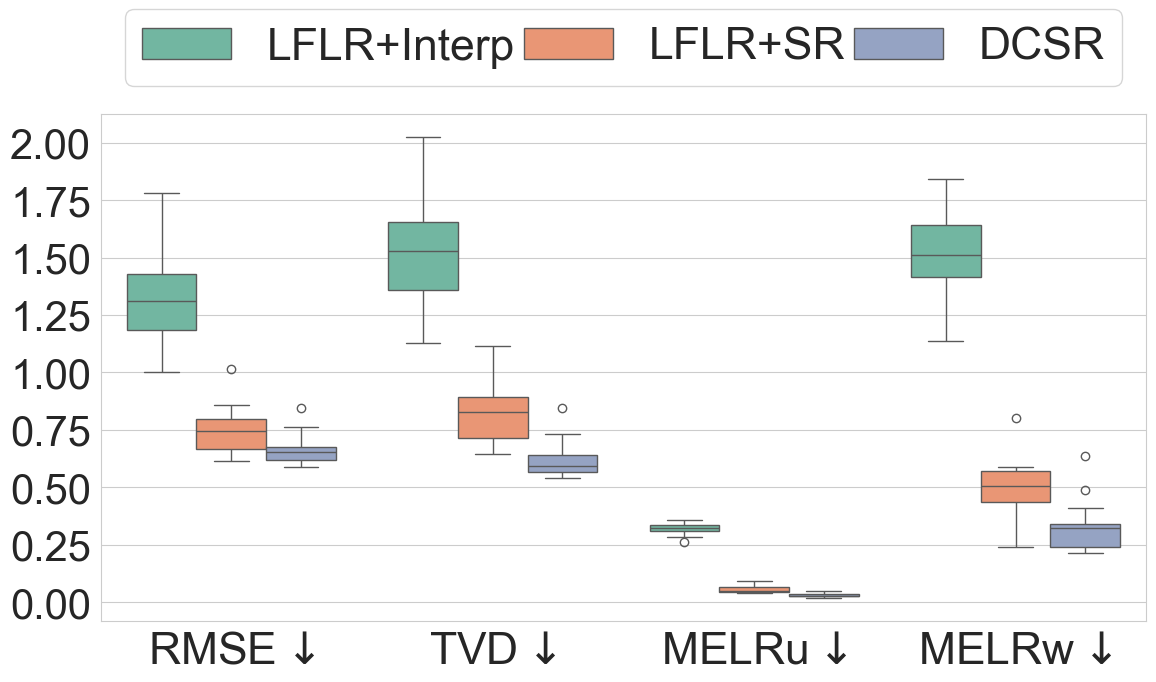}
    
    \includegraphics[width=0.55\linewidth]{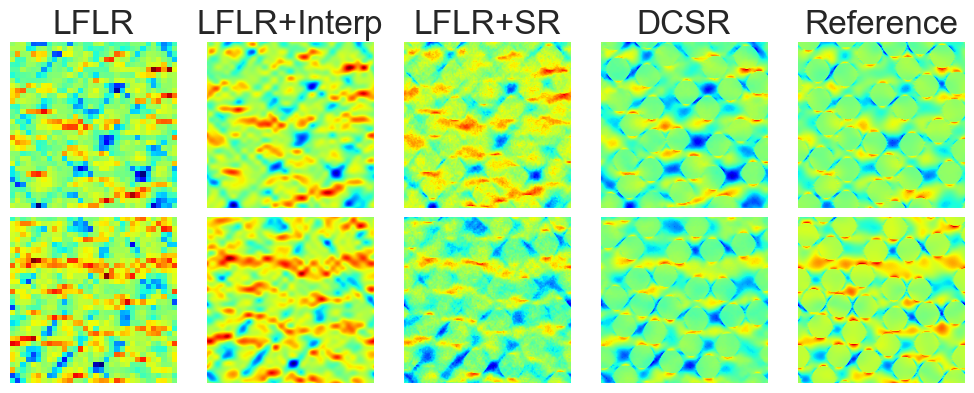}
    \hspace{0.01\linewidth}\includegraphics[width=0.37\linewidth]{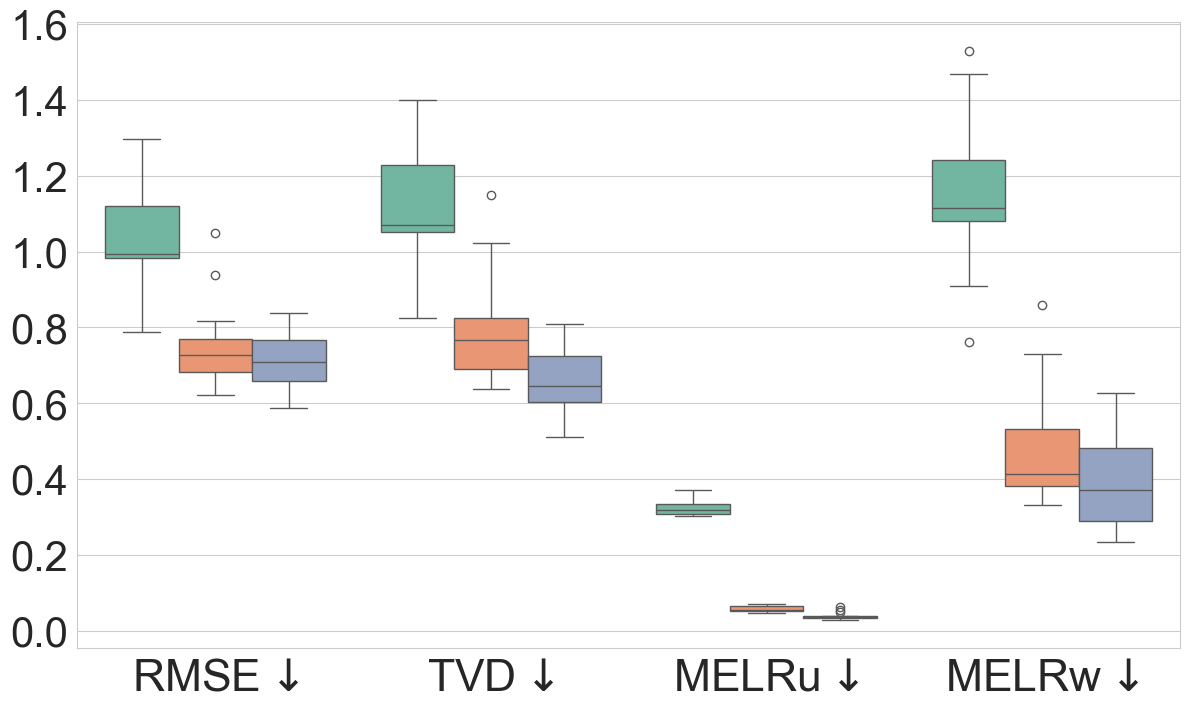}
    
    \caption{Strain field $\boldsymbol{\varepsilon}_{22}(\mathbf{x})$ distribution inside a RVE microstructure. The top row shows the results of low-fidelity data obtained using the continuous Green's operator, while the bottom row shows results of low-fidelity data obtained using the discrete Green's operator. The left columns compare low-fidelity data with results from various refinement methods. The right columns present boxplots comparing performance of various refinement methods across metrics.}
    \label{fig:ELAS_1}
\end{figure}

\newpage
\subsection{Navier-Stokes Equation}
In this example, we consider the vorticity form of the 2D Navier-Stokes equation with Kolmogorov force
\begin{equation}
\begin{cases} 
\partial_t \omega(t,\bx) + \mathbf{u}(t,\bx) \cdot \nabla \omega(t,\bx) - \nu \nabla^2 \omega(t,\bx) = f(\bx), \quad \bx \in [0, 2\pi]^2, t \in [0,T]\\
\nabla \cdot \mathbf{u}(t,\bx) = 0 , \quad \bx \in [0, 2\pi]^2, t \in [0,T] \\
w(0, \bx) = w_0(\bx), , \quad \bx \in [0, 2\pi]^2,
\end{cases}
\end{equation}
where $\bx = (x_1, x_2)$, $\omega(t,\bx)$ is the vorticity, $\mathbf{u}(t,\bx)=\nabla \times w(t, \bx)$ is the velocity field, and the Kolmogorov force is given by $f(\bx) =  \sin(k_0 x_1)$. In this example, we set the wavenumber \(k_0 = 2\) and viscosity \(\nu = 10^{-3}\). Functions with a resolution of \(256 \times 256\) are initially sampled from a log-normal distribution and evolved using a high-resolution solver for a time \(t_0 = 2\). The resulting state serves as the initial condition for later data generation.

From these initial conditions, the system is further evolved for a time \(T\) using the high-resolution solver to produce high-fidelity solutions. Low-fidelity solutions are generated by downsampling the high-fidelity data to a resolution of \(32 \times 32\) and evolving them using a low-resolution solver.

Both the low and high resolution solvers using pseudo-spectral methods implemented in \texttt{jax-cfd} \cite{Dresdner2022-Spectral-ML, kochkov2021machine}. The high-resolution solver operates on a \(256 \times 256\) uniform grid with a circular filter for stability, while the low-resolution solver, on a \(32 \times 32\) grid, using circular filter(Circ) and brick wall filter(BW). The results of these two example are presented in Figure~\ref{fig:ns_1}.

\begin{figure}[h]
    \centering
    \includegraphics[width=0.45\linewidth]{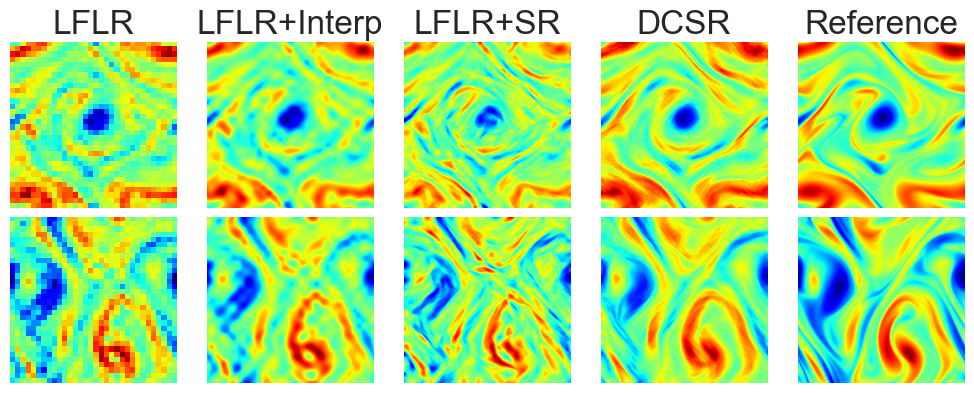}
    \includegraphics[width=0.27\linewidth, height=0.19\linewidth]{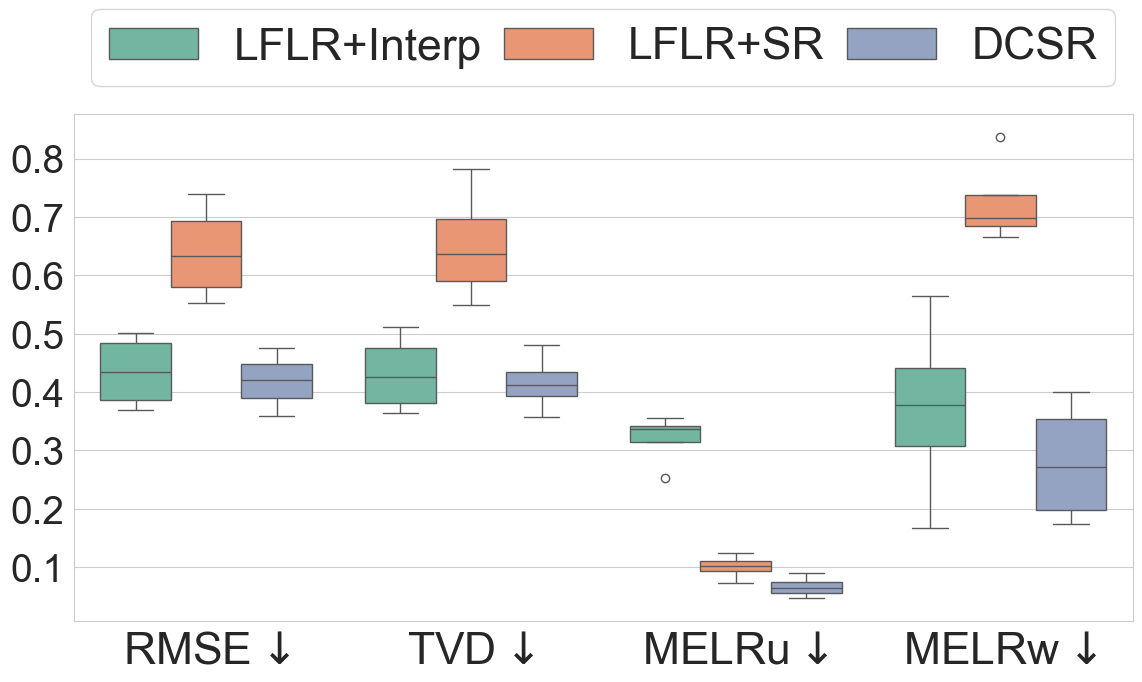}
    \includegraphics[width=0.27\linewidth, height=0.19\linewidth]{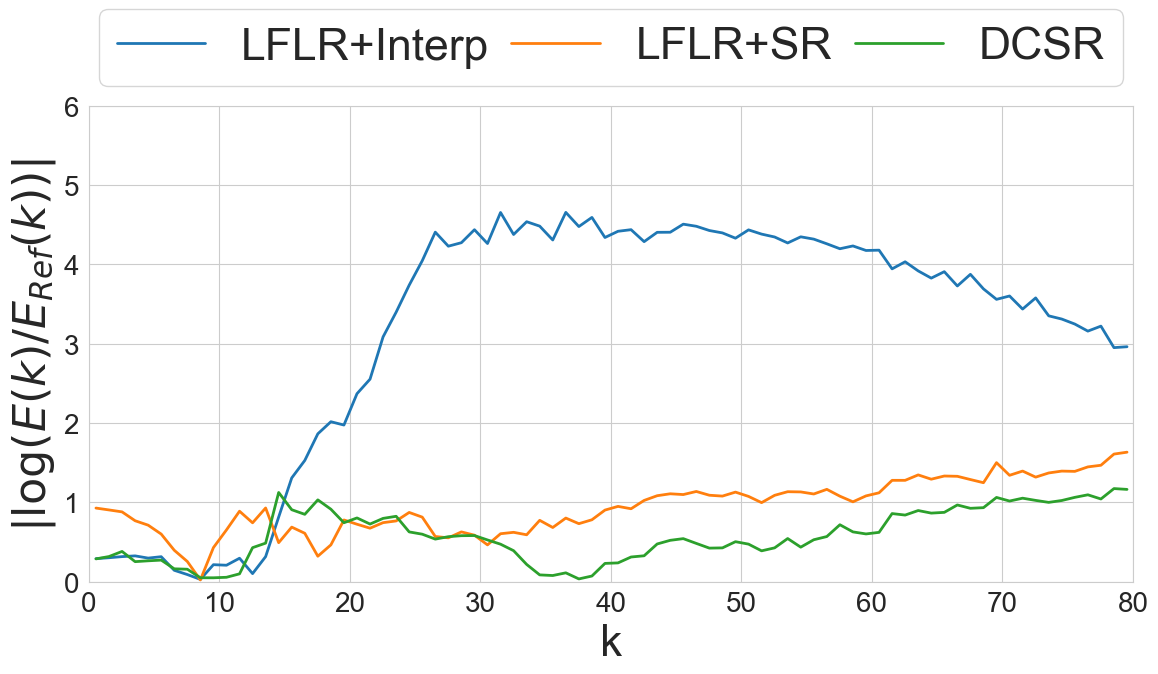}

    \includegraphics[width=0.45\linewidth]{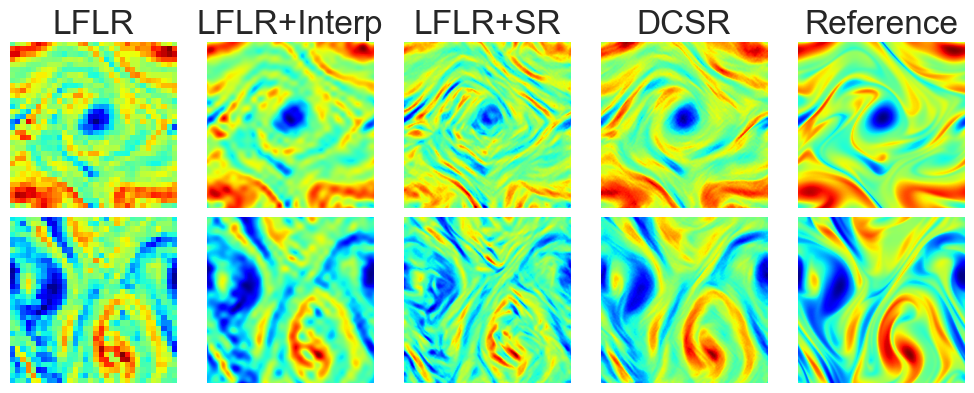}
    \includegraphics[width=0.27\linewidth, height=0.19\linewidth]{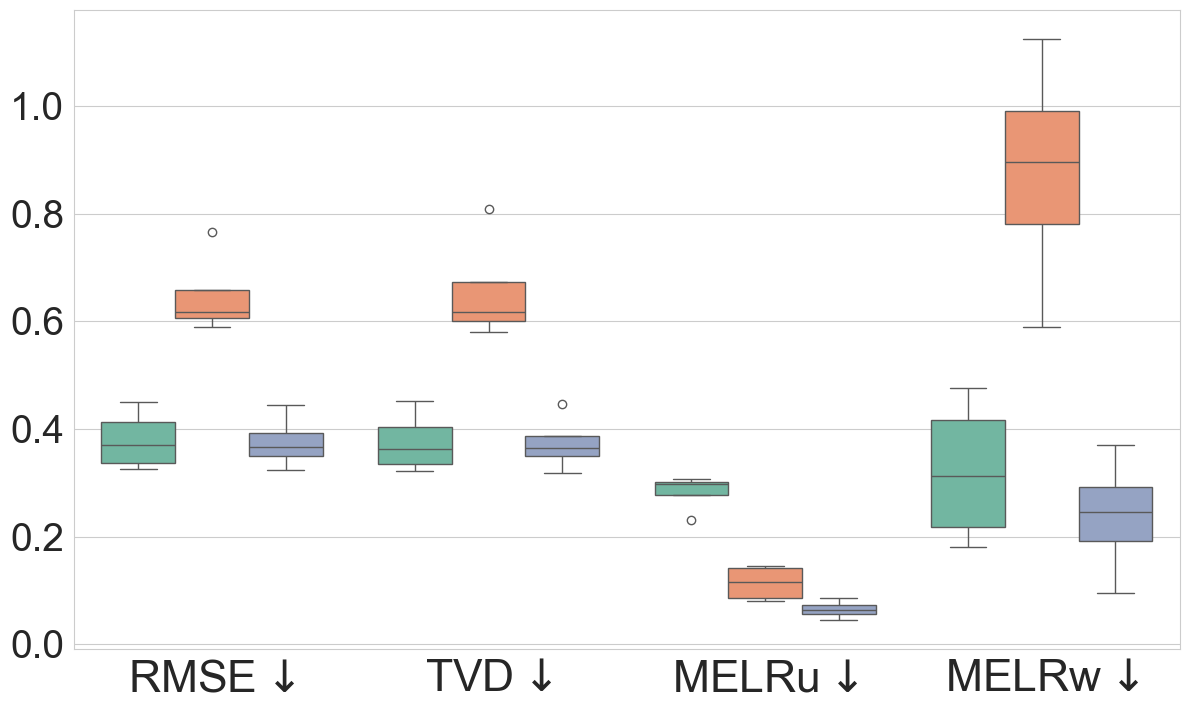}
    \includegraphics[width=0.27\linewidth, height=0.19\linewidth]{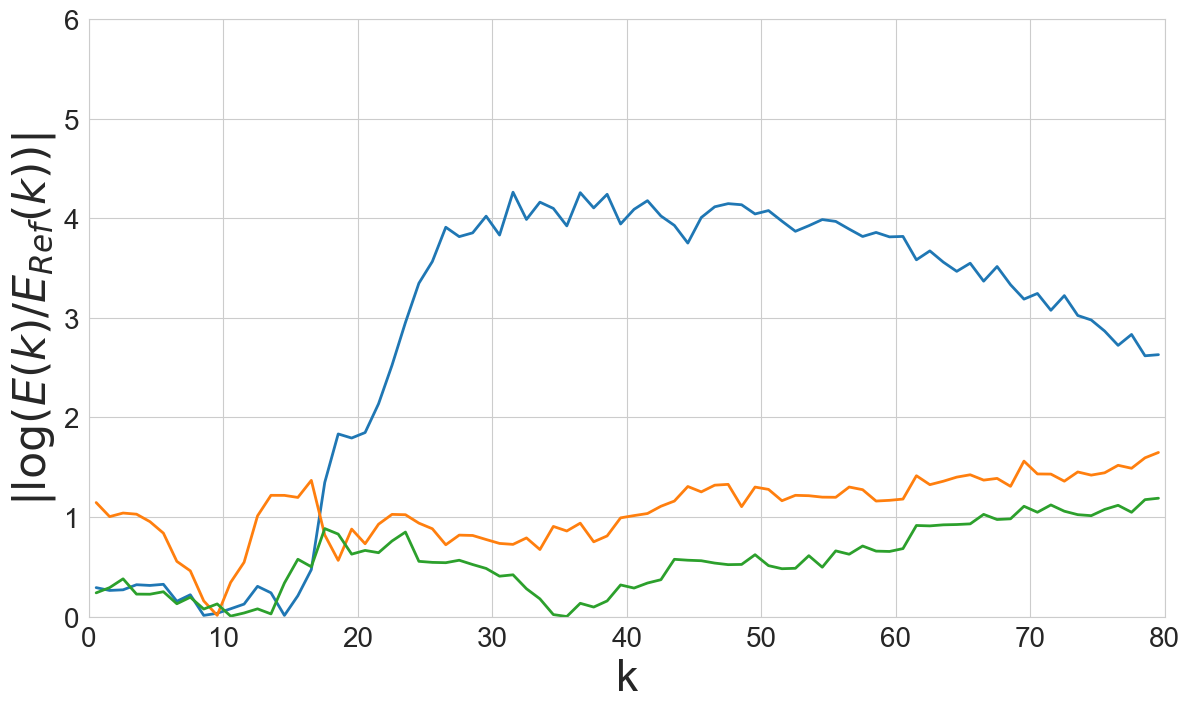}
    \caption{
    Numerical results for the Navier-Stokes equation. The top row shows the results of LFLR data obtained from a pseudo-spectral method with a circular filter, while the bottom row shows results of LFLR data obtained from a pseudo-spectral method with a brick wall filter. The left columns compare LFLR data with results from various refinement methods. The middle columns present boxplots comparing performance of various refinement methods across metrics. The right columns display the log ratio of the energy spectrum between the results produced by various refinement methods and the reference solution.}
    \label{fig:ns_1}
\end{figure}

\newpage
\subsection{Climate Data}\label{sec:climate}
In this subsection, we evaluate the effectiveness of DCSR on polluted climate data, focusing on Potential Vorticity (PV) and Wind Speed (WS) data from the ERA5 reanalysis dataset \cite{hersbach2020era5}. Specifically, we use hourly raw PV data at a pressure level of 400 hPa from the year 2023, and six-hourly WS magnitude at 10 meters above the surface from 2013 to 2023, both over the United States.

In our experiments, we randomly select $N=4000$ samples with a resolution of 256×256 as HFHR for training and $M=100$ for testing. To produce LFLR data from the $M=100$ testing samples, we first apply cubic interpolation to downsample the HFHR data to generate HFLR data with a resolution of 32×32. We then introduce three types of noise to generate polluted LFLR data. Specifically, we use three types of noise, White, Pink, and Brown, as defined in \eqref{eqn:noise}, with magnitude of $C_r=0.1$. We present the results of fidelity enhancement for data with white noise pollution in PV and WS using various fidelity enhancement methods in Figure~\ref{fig:CM_1}. The results for Pink and Brown noises are presented in Section~\ref{sec:supp} in Appendix.


\begin{figure}[h]
    \centering
    \includegraphics[width=0.55\linewidth]{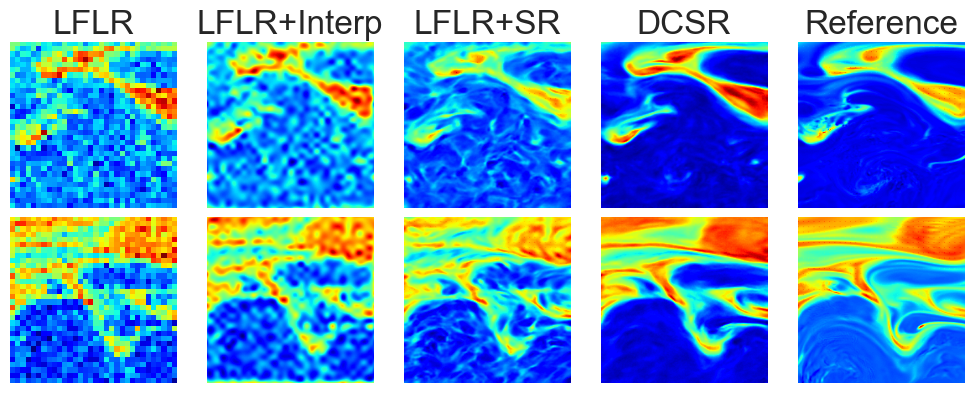}
    \includegraphics[width=0.38\linewidth]{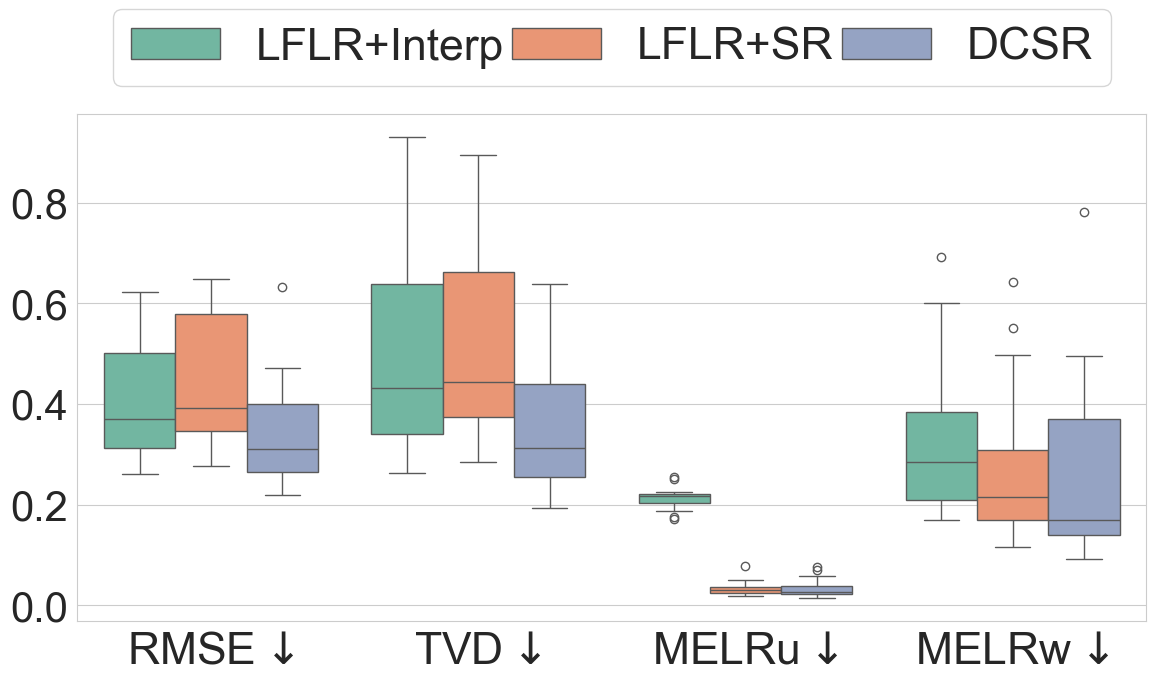}
    
    \includegraphics[width=0.55\linewidth]{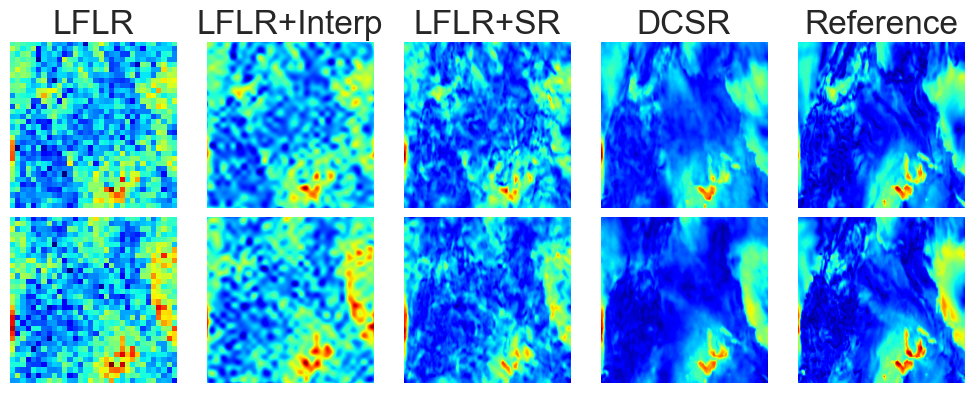}
    \includegraphics[width=0.38\linewidth]{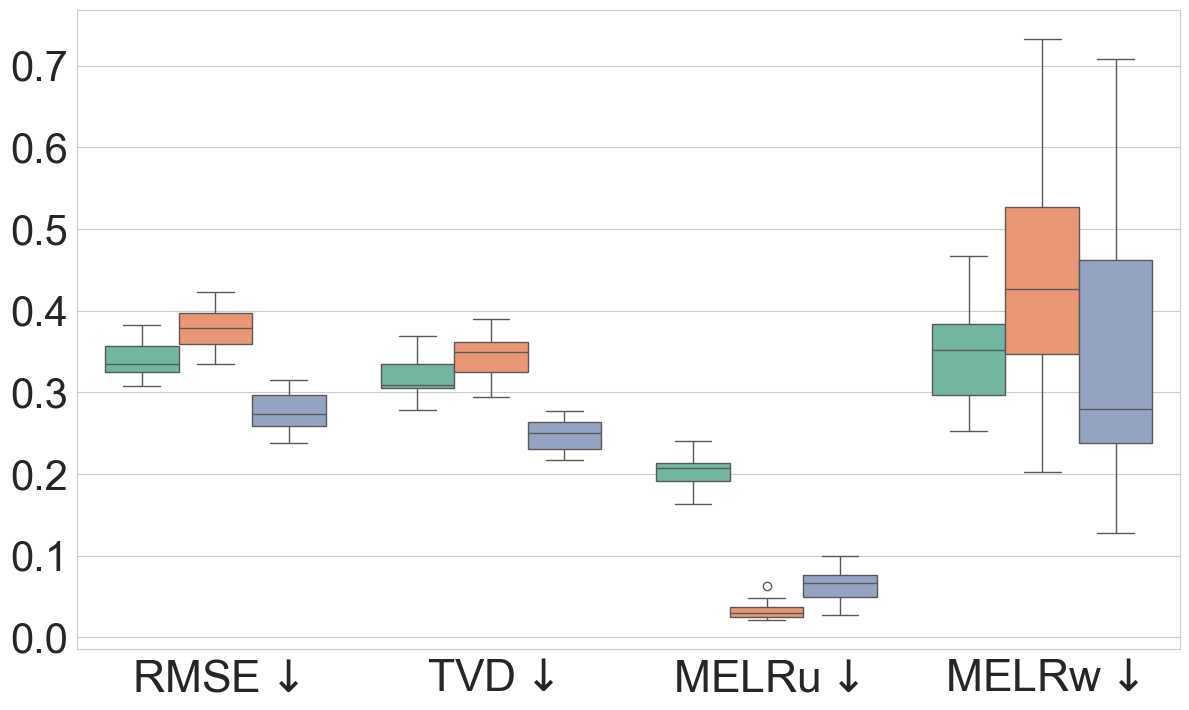}
    
    \caption{Top row presents results of low resolution potential vorticity data polluted with White noise and the bottom shows the results of LFLR wind speed polluted with White noise. The left columns compare LFLR data with results from various refinement methods. The right columns present boxplots comparing performance of various refinement methods across metrics.}
    \label{fig:CM_1}
\end{figure}

\section{Conclusion and Future Work}
We propose a purely data-driven method DCSR to enhance data fidelity, improving LFLR data to HFHR data. This method addresses three difficulties that most existing frameworks fails to overcome: 1. lack of underlying physics information; 2. The LFLR data cannot be treated as a simple downsampling of HFHR data, as it contains inherent biases that must be corrected; 3. the need for a unified approach capable of correcting biases due to diverse error sources. To address these challenges, we employ an enhanced SDEdit method incorporating a novel imbalanced perturbing and denoising strategy at the LR level to effectively correct various types of biases without relying on physical models. This step resolves the aforementioned challenges, with its effectiveness and robustness validated through theoretical analysis and experimental results. Finally, the corrected LR data is further refined using a cascaded SR3 model, recovering the desired small-scale details thus ensuring the alignment with the statistical properties of the HFHR data.

The versatile improvement capability of the proposed method makes it suitable for a wide range of data fidelity enhancement tasks. One notable application is in scientific computing, where the DCSR offers an efficient alternative to the computationally expensive HR solvers. LFLR data can first be generated using any LR solver and then enhanced using DCSR to improve fidelity. While the prediction of DCSR may still contain errors, they can serve as a good initial guess that can be further refined using HR solvers. A similar idea has recently been explored in  \cite{Lu2024}.

The second potential application is for climate prediction, where observational data is often contaminated by various types of noise. The DCSR offers a promising approach to correct these noises at low resolution and then downscale the corrected data to the HR counterparts. This method offers a cost-effective alternative to expensive real-world data collection, making it highly valuable for weather prediction. A potential future research direction is to chain the DCSR to iteratively enhance data fidelity. Specifically, the method could first improve low-fidelity data at low resolution (e.g., the NCEP Reanalysis \cite{kalnay2018ncep} with a spatial resolution of approximately 210 km), then refine it to medium resolution (e.g., the ERA5 Reanalysis \cite{hersbach2020era5} with a spatial resolution of approximately 31 km), and finally downscale it to high resolution (e.g., the Wind Integration National Dataset \cite{draxl2015overview} with a spatial resolution of approximately 2 km). We leave  the performance calibration of the proposed approach in these applications for future investigations.

\section*{Acknowledgment} 
YL thanks the support from the National Science Foundation through the award
DMS-2436333 and the support from the Data Science Initiative at University of
Minnesota through a MnDRIVE DSI Seed Grant.

\bibliographystyle{plain}
\bibliography{reference.bib}

\newpage 
\appendix
\section{Notation Table}
\begin{table}[h]
    \centering
    \begin{tabular}{c c}
    \hline
      Notation   &   Description \\
    \hline
    $\buh$    &   High fidelity high resolution (HFHR) data \\
     $\buth$    &  Low resolution version of high fidelity (HFLR) data\\
     $\bul$ & Low fidelity low resolution (LFLR) data\\
     $\be$ & Bias within LFLR data $\bul=\buth +\be$ \\
     $\mathcal{R}$ & Restriction operator that $\buth = \mathcal{R} (\buh)$ \\
     $\sigma(t)$ & Noise scheduling function\\
     $S_{\theta}(\buth(t), t)$ & Unconditional score model used to approximate $P(\buth)$  \\
     $S_{\xi_1}(\buth(t), \tilde{\mathbf{u}}_{128}^h, t)$ & Conditional score model used to approximate $P(\buth|\tilde{\mathbf{u}}_{128}^h)$  \\
     $S_{\xi_2}(\tilde{\mathbf{u}}_{128}^h(t), \tilde{\mathbf{u}}_{64}^h, t)$ & Conditional score model used to approximate $P(\tilde{\mathbf{u}}_{128}^h|\tilde{\mathbf{u}}_{64}^h)$  \\
     $S_{\xi_3}(\tilde{\mathbf{u}}_{64}^h(t), \buth, t)$ & Conditional score model used to approximate $P(\tilde{\mathbf{u}}_{64}^h|\buth)$  \\
     $L(\theta)$ & Loss function for $S_{\theta}$ defined in \eqref{eqn:loss_dc}\\
     $t_1$ & Forward perturbing time \\
     $t_2$ & Backward denoising time \\
     $\buhat(t_1, t_2)$ & The correction of $\bul$ using IPD with $t_1$ and $t_2$. \\
     $t^*_1,t^*_2$ & Optimal perturbing and denoising time selected by Algorithm~\ref{alg:t1t2} \\
     $\buhat(t^*_1, t^*_2)$ & The correction of $\bul$ using IPD with optimal $t^*_1$ and $t^*_2$. \\
     $\hat{\mathbf{u}}^h(t^*_1, t^*_2)$ & The enhancement of DCSR for $\bul$correction using IPD with optimal $t^*_1$ and $t^*_2$. \\
     $N$ & Number of training samples\\
     $M$ & Number of testing samples\\
     $\text{ODEsolve}(\bx(t_1), t_1, t_2; v)$ & The solution of ODE driven by velocity field $-\frac{1}{2} \frac{\rd [\sigma^2(t)]}{\rd t} v(\bx(t), t)$, \\ & starting from initial condition $\bx(t_1)$ at $t_1$ and evolving to $t_2$. \\
     BPD & Balanced perturbing and denoising process\\
     IPD & Imbalanced perturbing and denoising process\\
     TVD & Total Variation Distance \\
     RMSE & Relative root mean squared error \\
     MMD & Maximum Mean Discrepancy\\
     $\mWa$ & Wasserstein-2 distance\\
     MELRu & Unweighted mean energy log ratio \\
     MELRw & Weighted mean energy log ratio \\
     \hline
    \end{tabular}
    \caption{Table of Notations}
    \label{tab:notations}
\end{table}

\section{Hyperparameters of Neural Networks}
The noise scheduling function $\sigma(t)$ is selected as $\sigma(t) = \sqrt{\frac{\sigma^{2t} -1}{2 \log \sigma}}$ and the dimension of Gaussian random feature embeddings used in the time dependent UNet is 128. The remaining hyperparameters of diffusion models are presented in Table~\ref{tab:hyper}.
\begin{table}[h]
    \centering
    \begin{tabular}{|c|c|c|c|c|c|}
        \hline  
        & \makecell{$\mathcal{S}_{\theta}$(1D)} & \makecell{$\mathcal{S}_{\theta}$(2D)} & \makecell{$\mathcal{S}_{\xi_1}$} & \makecell{$\mathcal{S}_{\xi_2}$} & \makecell{$\mathcal{S}_{\xi_3}$}  \\
        \hline
        $\sigma$ & 25 & 25 & 50 & 50 &50  \\
        \hline
        \hline
        Base Channel & 64 & 64 &64 &128 &256 \\
        \hline
        \makecell{Down and Up \\ Channels multipliers} & 1, 2, 4 & 1, 2, 4, 8 & 1, 2, 4, 8 & 1, 2, 4, 8 & 1, 2, 4, 8 \\
        \hline
        Middle Channel & [256, 256] & [512, 512] & [512, 512] & [1024, 1024] & [2048, 2048] \\
        \hline
        \hline
        Batch size & 64 & 64 & 64 & 32 & 20 \\
        \hline
        Learning rate & 1e-3 & 5e-4 & 5e-4 & 5e-4 & 5e-4 \\
        \hline
    \end{tabular}
    \caption{Table of diffusion models hyperparameters}
    \label{tab:hyper}
\end{table}

\section{Proofs of Theoretical Results}\label{sec:proof}
Before presenting the proofs, we first introduce two notations. 
\begin{itemize}
    \item The \textit{correction} of LFLR data $\bul$ using IPD, which involves running the forward time SDE for $t_1$ and the backward PF ODE for $t_2$, is denoted by
    \begin{equation}
    \buhat(t_1, t_2) = \text{ODEsolve} (\bul(t_1), t_2, 0; S_{\theta}).
    \end{equation}

    \item The \textit{reconstruction} of HFLR data $\buth$ using IPD, which involves running the forward time SDE for $t_1$ and the backward PF ODE for $t_2$, is denoted by
\begin{equation}
\hat{\mathbf{u}}^h(t_1, t_2) = \text{ODEsolve} (\buth(t_1), t_2, 0; S_{\theta}).
\end{equation}
\end{itemize}

\subsection{Proof of Proposition~\ref{pro1}}
Before providing proof to Proposition~\ref{pro1}, we first establish the following two lemmas.
\begin{lemma}\label{lemma1}
    For two ODEs with different velocity function but with same initial conditions, i.e.
    \[
    \left\{
    \begin{array}{ll}
    \frac{\rd \bx_f(t)}{\rd t} = f(\bx_f, t), \quad t \in [0, 1]\\
    \bx_f(0) = \bx_0
    \end{array}
    \right.
    \]
    and 
    \[
    \left\{
    \begin{array}{ll}
    \frac{\rd \bx_g(t)}{\rd t} = g(\bx_g, t), \quad t \in [0, 1]\\
    \bx_g(0) = \bx_0
    \end{array}
    \right.
    \]
    If $g$ is $L$-Lipchitz in $\bx_g$, we have
    \[
    \| \bx_f(t) - \bx_g(t) \| \leq e^{Lt} \int_0^t  \|f(\bx_f, s) - g(\bx_f, s)\|_2 \rd s.
    \]
    \begin{proof}
        Let $h(t) = \bx_f(t) - \bx_g(t)$, then we have 
        \[
        \frac{d h(t)}{d t} = f(\bx_f, t) - g(\bx_g, t)
        \]
        which implies
        \[
        \|\frac{d h(t)}{d t}\|_2 \leq \|f(\bx_f, t) - g(\bx_f, t)\|_2 + \|g(\bx_f, t) - g(\bx_g, t)\|_2
        \]
        along with the fact that
        \[
        \frac{d \| h(t)\|_2}{d t} \leq \|\frac{d h(t)}{d t}\|_2
        \]
        and $g$ is $L$-Lipchitz continuous in $\bx_g$
        we get
        \[
        \frac{d \| h(t)\|_2}{d t} \leq \|f(\bx_f, t) - g(\bx_f, t)\|_2 + L \| h(t) \|_2
        \]
        multiply by $e^{-Lt}$ we have
        \[
        \frac{d [e^{-Lt} \| h(t)\|_2]}{d t} \leq e^{-Lt} \|f(\bx_f, t) - g(\bx_f, t)\|_2
        \]
        integrate from $0$ to $t$ we get
        \[
        e^{-Lt} \| h(t)\|_2 - 0 \leq \int_0^t e^{-Ls} \|f(\bx_f, s) - g(\bx_f, s)\|_2 \rd s
        \]
        which implies 
        \[
        \| h(t)\|_2 \leq e^{Lt} \int_0^t  \|f(\bx_f, s) - g(\bx_f, s)\|_2 \rd s
        \]
    \end{proof}
\end{lemma}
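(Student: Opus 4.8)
The plan is to run a standard Gr\"onwall-type comparison argument on the difference of the two trajectories. First I would introduce $h(t) := \bx_f(t) - \bx_g(t)$, which satisfies $h(0) = \mz$ because the two ODEs share the initial condition $\bx_0$. Differentiating in $t$ gives $\frac{\rd h(t)}{\rd t} = f(\bx_f, t) - g(\bx_g, t)$. The key algebraic move is to insert and subtract the Lipschitz field $g$ evaluated along the $\bx_f$ trajectory, writing $\frac{\rd h(t)}{\rd t} = [f(\bx_f, t) - g(\bx_f, t)] + [g(\bx_f, t) - g(\bx_g, t)]$. Applying the triangle inequality and the $L$-Lipschitz hypothesis on $g$ to the second bracket yields the differential inequality $\|\frac{\rd h(t)}{\rd t}\|_2 \leq \|f(\bx_f, t) - g(\bx_f, t)\|_2 + L\|h(t)\|_2$.

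Next I would pass from the derivative of $h$ to the derivative of its norm via the elementary bound $\frac{\rd \|h(t)\|_2}{\rd t} \leq \|\frac{\rd h(t)}{\rd t}\|_2$, which follows from the Cauchy--Schwarz estimate $\|h\|_2 \frac{\rd}{\rd t}\|h\|_2 = \langle h, \dot h\rangle \leq \|h\|_2 \|\dot h\|_2$. Chaining this with the previous bound produces the scalar differential inequality $\frac{\rd \|h(t)\|_2}{\rd t} \leq \|f(\bx_f, t) - g(\bx_f, t)\|_2 + L\|h(t)\|_2$, which is exactly the form amenable to an integrating-factor argument.

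To solve this inequality I would multiply through by $e^{-Lt}$ and recognize the left-hand side as $\frac{\rd}{\rd t}[e^{-Lt}\|h(t)\|_2]$, so that $\frac{\rd}{\rd t}[e^{-Lt}\|h(t)\|_2] \leq e^{-Lt}\|f(\bx_f, t) - g(\bx_f, t)\|_2$. Integrating from $0$ to $t$ and using $h(0) = \mz$ gives $e^{-Lt}\|h(t)\|_2 \leq \int_0^t e^{-Ls}\|f(\bx_f, s) - g(\bx_f, s)\|_2\,\rd s$. Multiplying back by $e^{Lt}$ yields $\|h(t)\|_2 \leq \int_0^t e^{L(t-s)}\|f(\bx_f, s) - g(\bx_f, s)\|_2\,\rd s$, and since $s \geq 0$ forces $e^{L(t-s)} \leq e^{Lt}$, I can pull the constant out of the integral to reach the stated bound $\|\bx_f(t) - \bx_g(t)\|_2 \leq e^{Lt}\int_0^t \|f(\bx_f, s) - g(\bx_f, s)\|_2\,\rd s$.

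I expect the only genuine obstacle to be the non-smoothness of the map $t \mapsto \|h(t)\|_2$ at the zeros of $h$, where the norm fails to be differentiable; everywhere else the computation is routine. This is handled by observing that $\|h(t)\|_2$ is absolutely continuous and that the inequality $\frac{\rd}{\rd t}\|h\|_2 \leq \|\dot h\|_2$ holds almost everywhere, so the integration step above remains valid. Alternatively one can phrase the entire argument using the upper Dini derivative of $\|h(t)\|_2$, for which the same chain of inequalities holds pointwise and the integrated Gr\"onwall conclusion is unchanged.
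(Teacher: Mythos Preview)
Your proposal is correct and follows essentially the same Gr\"onwall/integrating-factor argument as the paper: define $h = \bx_f - \bx_g$, split $f(\bx_f,t) - g(\bx_g,t)$ by adding and subtracting $g(\bx_f,t)$, pass to a scalar differential inequality for $\|h\|_2$, multiply by $e^{-Lt}$, and integrate. Your treatment is in fact slightly more careful than the paper's, since you make explicit the final step $e^{L(t-s)} \leq e^{Lt}$ and flag the non-differentiability of $\|h\|_2$ at zeros, neither of which the paper addresses.
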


\begin{lemma}\label{lemma2}
    For a $\buth \sim p(\buth)$ and a $t \in (0, 1)$, if training loss defined in equation \eqref{eqn:loss} satisfies $L<\delta$, then the expectation of $L^2$ distance between $\buth$ and its reconstruction $\hat{\mathbf{u}}^h(t, t)$  is bounded by:
    \begin{equation}
       \mE_{\buth \sim p(\buth), \varepsilon \sim \mN(\mz, \mI)} [\| \buth - \hat{\mathbf{u}}^h(t, t) \|_2^2 ] \leq e^{2L_s t} \sigma^2(t) \delta.
    \end{equation}
\end{lemma}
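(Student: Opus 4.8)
The plan is to bound the reconstruction error by comparing two probability-flow trajectories that share the same state at time $t$ and are both integrated backward to $0$: an \emph{ideal} trajectory that returns exactly to $\buth$, and the \emph{learned} trajectory driven by $S_\theta$ that returns to $\hat{\mathbf{u}}^h(t,t)$. Concretely, fix $\buth$ and $\beps$, and let $\bx_f(s) = \buth + \sigma(s)\beps$ for $s\in[0,t]$ be the straight-line-in-noise path; it satisfies $\bx_f(t) = \buth(t) = \buth + \sigma(t)\beps$ and $\bx_f(0) = \buth$. A direct computation shows $\bx_f$ solves the probability-flow ODE with velocity field $-\tfrac12\tfrac{\rd[\sigma^2(s)]}{\rd s}\,\bar{s}(\bx_f,s)$, where $\bar{s}(\bx,s) = -(\bx-\buth)/\sigma^2(s)$ is the exact conditional (denoising) score. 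The learned reconstruction solves the same ODE but with $S_\theta$ replacing $\bar s$, starting from the same state $\buth(t)$ at time $t$. Since the two fields differ only through the score, this is exactly the setting of Lemma~\ref{lemma1}.

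First I would apply Lemma~\ref{lemma1}, after a time reversal $r = t-s$ that recasts the backward integration as a forward one on $[0,t]$. Using that $S_\theta$ (hence the learned velocity field) is Lipschitz, Gr\"onwall yields a bound of the form
\[
\| \buth - \hat{\mathbf{u}}^h(t,t) \|_2 \le e^{L_s t}\int_0^t \left\| \tfrac12\tfrac{\rd[\sigma^2(s)]}{\rd s}\big(\bar s(\bx_f,s) - S_\theta(\bx_f,s)\big)\right\|_2 \rd s,
\]
where the integrand is evaluated along the ideal path $\bx_f(s) = \buth + \sigma(s)\beps$.

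The key identity is to recognize this integrand as the training-loss residual. Since $\bar s(\bx_f,s) = -\beps/\sigma(s)$, one gets $\bar s(\bx_f,s) - S_\theta(\bx_f,s) = -\tfrac{1}{\sigma(s)}\big(\beps + \sigma(s)S_\theta(\buth+\sigma(s)\beps, s)\big)$, and combining with $\tfrac12\tfrac{\rd[\sigma^2(s)]}{\rd s} = \sigma(s)\dot\sigma(s)$ collapses the prefactor so that the integrand equals $\dot\sigma(s)\,\|\beps + \sigma(s)S_\theta(\buth+\sigma(s)\beps, s)\|_2$. Its squared expectation over $\buth,\beps$ is precisely the per-time contribution to the loss in \eqref{eqn:loss_dc}. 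Squaring the Gr\"onwall bound, taking expectations, and applying Cauchy--Schwarz against the measure $\dot\sigma(s)\,\rd s$ (whose total mass on $[0,t]$ is $\sigma(t)$) then produces the factor $\sigma(t)$ and reduces the problem to controlling $\int_0^t \dot\sigma(s)\,\mE\|\beps+\sigma(s)S_\theta\|_2^2\,\rd s$ by $\sigma(t)\delta$.

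The main obstacle I expect is this final reconciliation: the reconstruction error is governed by a $\dot\sigma$-weighted time integral of the score-matching error, whereas the training loss $L(\theta)<\delta$ weights time uniformly on $[0,1]$. Extracting the clean factor $\sigma^2(t)\delta$ requires relating these two weightings through the structure and monotonicity of the noise schedule $\sigma$, so that the weighted integral is dominated by $\sigma(t)\,L(\theta)$. A secondary point needing care is the Gr\"onwall constant: the hypothesis supplies $L_s$-Lipschitzness of $S_\theta$ itself, so tracking how the schedule factor inside the velocity field enters the exponential — and thereby justifying the stated $e^{2L_s t}$ — must be carried out explicitly. Everything else, namely the ODE comparison via Lemma~\ref{lemma1} and the algebraic identification of the integrand with the loss, is routine once the ideal path $\bx_f$ is chosen as above.
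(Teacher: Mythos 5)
Your proposal follows essentially the same route as the paper's proof: realize $\buth$ as the output of the backward PF ODE driven by the exact conditional score along the path $\buth+\sigma(s)\beps$, apply Lemma~\ref{lemma1} to compare with the $S_\theta$-driven trajectory, identify the velocity discrepancy with the score-matching residual $\dot\sigma(s)\bigl(\beps+\sigma(s)S_\theta\bigr)$, and close with Cauchy--Schwarz. The ``main obstacle'' you flag --- reconciling the $\dot\sigma(s)\,\rd s$-weighted integral over $[0,t]$ with the uniformly weighted training loss over $[0,1]$ --- is real, and it is precisely the step the paper's own proof passes over without explicit justification when it bounds the squared integral by $\sigma^2(t)\int_0^1\|\beps+\sigma(s)S_\theta\|_2^2\,\rd s$.
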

\begin{proof}
    \begin{align*}
        \| \buth - \hat{\mathbf{u}}^h(t,t) \|_2^2 & = \| \text{ODEsolve} (\buth+\sigma(t)\beps, t, 0; \nabla_{\tilde{\mathbf{u}}} \log p(\tilde{\mathbf{u}}(t)))  \\
        & \quad \quad - \text{ODEsolve} (\buth+\sigma(t)\beps, t, 0; S_{\theta}) \|_2^2 
    \end{align*}
    By Lemma~\ref{lemma1}, Cauchy–Schwarz inequality and the fact that $\sigma(0)=0$, we have
    \begin{align*}
        \| \buth - \hat{\mathbf{u}}^h(t,t) \|_2^2 & \leq e^{2L_s t} \left( \int_t^0 \| \sigma'(s) \sigma(s) (  \nabla_{\tilde{\mathbf{u}}} \log p(\tilde{\mathbf{u}}(s)) -  S_{\theta} (\tilde{\mathbf{u}} + \sigma(s) \beps, s) ) \|_2 \rd s \right)^2 \\
        & = e^{2L_s t} \left( \int_t^0 \| \sigma'(s) \sigma(s) (  -\frac{\buth(s)-\buth}{\sigma(s)} -  S_{\theta} (\tilde{\mathbf{u}} + \sigma(s) \beps, s) ) \|_2 \rd s \right)^2 \\
        & \leq e^{2L_s t} \sigma^2(t) \int_0^1 \| \beps + \sigma(s) S_{\theta} (\tilde{\mathbf{u}} + \sigma(s) \beps, s)  \|_2^2 \rd s
    \end{align*}
    Apply the expectation we have
    \begin{align*}
    \mE_{\buth \sim p(\buth), \varepsilon \sim \mN(\mz, \mI)} [\| \buth - \hat{\mathbf{u}}^h(t,t) \|_2^2 ] & \leq e^{2L_s t} \sigma^2(t) \mE_{\buth \sim p(\buth), \varepsilon \sim \mN(\mz, \mI)} [ \int_0^1 \| \beps + \sigma(s) S_{\theta} (\tilde{u} + \sigma(s) \beps, s)  \|_2^2 \rd s ]\\
    & \leq e^{2L_s t} \sigma^2(t) \delta
    \end{align*}
\end{proof}

\addtocounter{proposition}{-2}

\noindent \textbf{The proof of Proposition~\ref{pro1}} 
\setcounter{proposition}{0}
\begin{proposition}
    For $\buth \sim p(\buth)$, and the low fidelity data $\bul = \buth + \be$ with the error term $\be \sim p(\be)=\mN(\mz, \gamma^2 \mI) $, if the empirical training loss defined in equation \eqref{eqn:loss} satisfies $L(\theta)<\delta$, then for $0<t_1<t_2<1$ such that $\sigma^2(t_2) =\sigma^2(t_1) + \gamma^2$, the expected $L^2$ distance between $\buth$ and its reconstruction $\buhat(t_1, t_2)$  is bounded by:
    \begin{equation}
        \mE_{\buth \sim p(\buth), \beps \sim \mN(\mz, \mI), \be \sim p(\be)}(\| \buhat(t_1, t_2) - \buth \|^2_2) \leq e^{2L_st_2} \sigma^2(t_2) \delta.
    \end{equation}
\end{proposition}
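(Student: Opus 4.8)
The plan is to exploit the Gaussian structure of the bias to reduce the IPD correction of the LFLR data to a \emph{balanced} reconstruction of the HFLR data, and then invoke Lemma~\ref{lemma2} directly. The starting observation is that the perturbed LFLR data can be written as
\begin{equation*}
\bul(t_1) = \buth + \be + \sigma(t_1)\beps, \quad \be \sim \mN(\mz, \gamma^2 \mI), \ \beps \sim \mN(\mz, \mI),
\end{equation*}
where $\be$ and $\beps$ are independent and both independent of $\buth$. Since a sum of independent centered Gaussians is again Gaussian with added covariances, the combined perturbation satisfies $\be + \sigma(t_1)\beps \sim \mN(\mz, (\gamma^2 + \sigma^2(t_1))\mI)$. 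Under the hypothesis $\sigma^2(t_2) = \sigma^2(t_1) + \gamma^2$, this variance equals $\sigma^2(t_2)$, so $\be + \sigma(t_1)\beps$ is equal in distribution to $\sigma(t_2)\beps'$ for a fresh $\beps' \sim \mN(\mz, \mI)$.

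Next I would make this matching precise at the level of the correction error. Writing $\buhat(t_1, t_2) = \text{ODEsolve}(\buth + \be + \sigma(t_1)\beps, t_2, 0; S_{\theta})$, the error $\|\buhat(t_1, t_2) - \buth\|_2^2$ is a deterministic function of the pair $(\buth, \be + \sigma(t_1)\beps)$. Conditioning on $\buth$ and using that $\be + \sigma(t_1)\beps$ and $\sigma(t_2)\beps'$ are identically distributed and independent of $\buth$, the conditional expectation of the error is unchanged if we replace the combined noise by $\sigma(t_2)\beps'$. Taking the outer expectation over $\buth$ then gives
\begin{equation*}
\mE_{\buth, \beps, \be}\bigl[\|\buhat(t_1, t_2) - \buth\|_2^2\bigr] = \mE_{\buth, \beps'}\bigl[\|\hat{\mathbf{u}}^h(t_2, t_2) - \buth\|_2^2\bigr],
\end{equation*}
since the right-hand integrand is exactly $\text{ODEsolve}(\buth + \sigma(t_2)\beps', t_2, 0; S_{\theta}) = \hat{\mathbf{u}}^h(t_2, t_2)$, the balanced reconstruction of $\buth$ at time $t_2$.

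Finally I would apply Lemma~\ref{lemma2} with $t = t_2$, which bounds $\mE[\|\buth - \hat{\mathbf{u}}^h(t_2, t_2)\|_2^2]$ by $e^{2L_s t_2}\sigma^2(t_2)\delta$, yielding the claimed inequality. The only delicate point is the distributional reduction in the second step: one must verify that the squared-error functional depends on the randomness only through the \emph{aggregate} noise $\be + \sigma(t_1)\beps$ (so that matching the law of this aggregate is enough), and that conditioning on $\buth$ is legitimate because both noise sources are independent of $\buth$. Everything else is a direct consequence of the additivity of independent Gaussian variances together with Lemma~\ref{lemma2}; no new estimate on the score network or the ODE flow is needed beyond those already established, which is precisely why the bound improves on BPD when $\gamma$ is small.
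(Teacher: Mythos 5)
Your proposal is correct and follows essentially the same route as the paper's own proof: both reduce the IPD correction to the balanced reconstruction $\hat{\mathbf{u}}^h(t_2, t_2)$ by observing that $\be + \sigma(t_1)\beps$ and $\sigma(t_2)\beps'$ are equal in distribution (independent Gaussian variances add, and $\sigma^2(t_1) + \gamma^2 = \sigma^2(t_2)$), and then apply Lemma~\ref{lemma2} at $t = t_2$. Your version is in fact slightly more careful than the paper's, since you explicitly justify the equality of expectations via conditioning on $\buth$ and the independence of the noise sources, a step the paper states without elaboration.
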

\begin{proof}
Since the injected perturbation $\sigma(t_1) \beps$ and the bias $e$ are independent, their sum can be represented as a single zero-mean Normal variable with the variance $\sigma^2(t_1) + \gamma^2$, which is equal to $\sigma^2(t_2)$. Thus we have
    \begin{align*}
        \mE_{\buth \sim p(\buth), \beps \sim \mN(\mz, \mI), \be \sim p(\be)} & (\| \buhat(t_1, t_2) - \buth \|^2_2) \\
        &=\mE_{\buth \sim p(\buth), \beps \sim \mN(\mz, \mI)} (\| \hat{\mathbf{u}}^h(t_2, t_2) - \buth \|^2_2) \\
        & \leq e^{2L_st_2} \sigma^2(t_2) \delta.
    \end{align*}
\end{proof}

\addtocounter{theorem}{-1}
\noindent \textbf{Proof of Theorem~\ref{thm}}

\begin{theorem}
    For a HFLR data $\buth \sim p(\buth)$, let $\bul = \buth + e$ be a LFLR data, where $e$ is the bias. Assume $0<t_1<t_2<1$. Then for all $\lambda \in (0,1)$, with probability at least $1-\lambda$, the $L^2$ distance between the target HFLR data $\buth$ and the correction $\buhat(t_1, t_2)$ is bounded by
    \begin{align*}
        \| \buhat(t_1, t_2) - \buth \|^2_2 \leq e^{2L_s t_2} [\| e \|_2^2 + \sigma^2(t_2)\delta + (\sigma^2(t_1) + \sigma^2(t_2)) C_{\lambda}].
    \end{align*}
    where $C_{\lambda}=d + 2 \sqrt{d \log\frac{1}{\lambda}} + 2\log\frac{1}{\lambda}$.
\end{theorem}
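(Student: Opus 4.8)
The plan is to bound the correction error $\|\buhat(t_1,t_2)-\buth\|_2^2$ by splitting it into three conceptually distinct contributions: the bias $\be$, the score-approximation error controlled by $\delta$, and the injected Gaussian perturbation controlled by concentration. The backbone of every step is the Gronwall estimate of Lemma~\ref{lemma1}, which converts a difference of two probability-flow trajectories into the product of a Lipschitz factor $e^{L_s t_2}$ and either an initial-condition gap or an integrated velocity-field gap.

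First I would isolate the bias. Writing the correction as $\buhat(t_1,t_2)=\text{ODEsolve}(\bul(t_1),t_2,0;S_\theta)$ with $\bul(t_1)=\buth+\be+\sigma(t_1)\beps$, I would compare it against the bias-free reconstruction $\hat{\mathbf{u}}^h(t_1,t_2)=\text{ODEsolve}(\buth+\sigma(t_1)\beps,t_2,0;S_\theta)$ driven by the same score field and the same noise realization $\beps$. Since the two solve an identical ODE and differ only in their initial data, by exactly $\be$, a Gronwall argument in the spirit of Lemma~\ref{lemma1} (continuous dependence on initial conditions for a shared velocity field) gives $\|\buhat(t_1,t_2)-\hat{\mathbf{u}}^h(t_1,t_2)\|_2\le e^{L_s t_2}\|\be\|_2$, which produces the $\|\be\|_2^2$ term.

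Next I would control the bias-free reconstruction error $\hat{\mathbf{u}}^h(t_1,t_2)-\buth$. The subtlety is that this imbalanced trajectory passes through the states $\buth+\tfrac{\sigma(s)}{\sigma(t_2)}\sigma(t_1)\beps$, whose noise scale does not match the forward VE process, so the training loss cannot be invoked along it directly. I would therefore route through the balanced reconstruction $\hat{\mathbf{u}}^h(t_2,t_2)=\text{ODEsolve}(\buth+\sigma(t_2)\beps,t_2,0;S_\theta)$ at the larger time, which is genuinely on-distribution. One more application of the flow map's Lipschitz dependence on initial data controls the gap $\sigma(t_1)\beps$ versus $\sigma(t_2)\beps$, bounded by $(\sigma^2(t_1)+\sigma^2(t_2))\|\beps\|_2^2$, while Lemma~\ref{lemma2}, whose trajectory coincides with the forward process and thus feeds directly into $L(\theta)<\delta$, supplies the $\sigma^2(t_2)\delta$ contribution. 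The high-probability character of the statement then enters by replacing $\mE\|\beps\|_2^2=d$ with the Laurent--Massart $\chi^2$ upper-tail bound, so that $\|\beps\|_2^2\le C_\lambda=d+2\sqrt{d\log\tfrac1\lambda}+2\log\tfrac1\lambda$ holds with probability at least $1-\lambda$, yielding the $(\sigma^2(t_1)+\sigma^2(t_2))C_\lambda$ term.

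Finally I would assemble the three pieces by the triangle inequality and factor out the common $e^{2L_s t_2}$. I expect the main obstacle to be the bookkeeping of the cross terms together with the off-distribution evaluation of the score: the bias $\be$, the perturbation $\sigma(t_1)\beps$, and the noise-scale mismatch at $\sigma(t_2)$ interact when the sum of norms is squared, and the loss bound $\delta$ must be imported through the on-distribution balanced trajectory rather than the literal correction trajectory. This is precisely the difficulty that the Gaussian special case of Proposition~\ref{pro1} avoids: there the identity $\sigma^2(t_2)=\sigma^2(t_1)+\gamma^2$ makes the perturbed LFLR sample equal in law to an on-distribution forward sample at time $t_2$, so the bias and perturbation terms collapse and only the $\sigma^2(t_2)\delta$ term survives.
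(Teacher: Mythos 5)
Your proposal is correct and takes essentially the same route as the paper's proof: Gronwall/Lipschitz stability of the probability-flow ODE (Lemma~\ref{lemma1}) to compare the correction with the balanced reconstruction $\hat{\mathbf{u}}^h(t_2,t_2)$, Lemma~\ref{lemma2} for the on-distribution term $e^{2L_s t_2}\sigma^2(t_2)\delta$, and the Laurent--Massart $\chi^2$ tail bound to produce $C_\lambda$. The only difference is cosmetic---you split the initial-condition gap into two steps (bias, then noise-scale mismatch) where the paper compares $\text{ODEsolve}(\buth+\be+\sigma(t_1)\beps_1,t_2,0;S_\theta)$ against $\text{ODEsolve}(\buth+\sigma(t_2)\beps_2,t_2,0;S_\theta)$ in a single step---and the cross-term bookkeeping you rightly flag as a loose end is glossed over in the paper's own proof in exactly the same way.
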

\begin{proof}
\begin{align}
    \| \buhat(t_1, t_2) - \buth \|_2^2 &
    \leq \|\buhat(t_1, t_2) - \hat{u}^h(t_2, t_2) \|_2^2 + \|\hat{u}^h(t_2, t_2)  - \buth \|_2^2 \label{eqn:total_error}
\end{align}
To bound the first term in \eqref{eqn:total_error}, we leverage the stability of the backward PF ODE \eqref{eqn:pfode_notation}, which gives:
\begin{align}
    \|\buhat(t_1, t_2) - \hat{\mathbf{u}}^h(t_2, t_2) \|_2^2  &= \| \text{ODEsolve}(\buth+\be+\sigma(t_1) \beps_1, t_2, 0; S_{\theta}) - \text{ODEsolve} (\buth+\sigma(t_2) \beps_2, t_2, 0; S_{\theta})\|_2^2 \\
    & \leq e^{2L_s t_2} \| \be + \sigma(t_1) \beps_1 - \sigma(t_2) \beps_2\|_2^2 \\
    & \leq e^{2L_s t_2} [\| \be \|_2^2 + (\sigma^2(t_1) + \sigma^2(t_2)) \| \beps \|_2^2]. \label{eqn:first_error}
\end{align}
Using the Lemma 1, we can bound the second term in \eqref{eqn:total_error} by:
\begin{align}
    \|\hat{\mathbf{u}}^h(t_2, t_2)  - \buth \|_2^2 & \leq e^{2 L_s t_2}\sigma^2(t) \delta.\label{eqn:second_error}
\end{align}
Combining \eqref{eqn:first_error} and \eqref{eqn:second_error}, we get
\[
\| \buhat(t_1, t_2) - \buth \|_2^2 \leq e^{2L_s t_2} [\| \be \|_2^2 + \sigma^2(t)\delta + (\sigma^2(t_1) + \sigma^2(t_2)) \| \beps \|_2^2]
\]
Since $\| \beps \|^2_2 \sim \chi^2(d)$, from the Lemma 1 in \cite{laurent2000adaptive}, we have
\[
p(\| \beps \|^2_2 \geq d + 2 \sqrt{d q} + 2q) \leq e^{-q}
\]
Let $e^{-q} = \lambda$, we get 
\[
p(\| \beps \|^2_2 \geq d + 2 \sqrt{d \log\frac{1}{\lambda}} + 2\log\frac{1}{\lambda}) \leq \lambda
\]
we then have with probability at least $1-\lambda$ such that
\[
\| \buhat(t_1, t_2) - \buth \|_2^2 \leq e^{2L_s t_2} [\| \be \|_2^2 + \sigma^2(t_2)\delta + (\sigma^2(t_1) + \sigma^2(t_2)) C_{\lambda}].
\]
where 
\[
c_\lambda = d + 2 \sqrt{d \log\frac{1}{\lambda}} + 2\log\frac{1}{\lambda}
\]
\end{proof}

\section{Metrics}\label{sec:metrics}
We define the following metrics that measure the distance between two empirical distributions $\{ \bu_i \}_{i=1}^N$ and $\{ \bv_j \}_{j=1}^M$:
	 \begin{itemize}
     \item Total Variation Distance (TVD) (two distributions have same size $N=M$): 
    \begin{equation}\label{eqn:tvd}
        \text{TVD}(\bu,\bv) = \frac{1}{N} \sum_{i=1}^N \frac{\| \bu_i - \bv_i \|_1}{\| \bv_i \|_1}
    \end{equation}
	 \item Relative root mean squared error (RMSE) (two distributions have same size $N=M$):
        \begin{equation}\label{eqn:RMSE}
            \text{RMSE}(\bu, \bv) = \frac{1}{N} \sum_{i=1}^N \frac{\| \bu_i - \bv_i \|_2}{\| \bv_i \|_2}
        \end{equation}
    \item Maximum Mean Discrepancy (MMD):
    \begin{equation}\label{eqn:mmd}
    \text{MMD}(\bu,\bv) = 
    \frac{1}{N^2} \sum_{i=1}^N \sum_{j=1}^N k(\bu_i, \bu_j) + \frac{1}{M^2} \sum_{i=1}^M \sum_{j=1}^M k(\bv_i, \bv_j) - \frac{2}{NM} \sum_{i=1}^N \sum_{j=1}^M k(\bu_i, \bv_j)
    \end{equation}
    where
    \[
    k(\bu, \bv) = \exp\left(-\frac{\|\bu - \bv\|^2}{2l^2}\right),
    \]
    is a Gaussian Kernel, and we let $l=0.01$ in this paper.
    
    \item Wasserstein-2 distance ($\mWa$):
    \begin{equation}\label{eqn:w2}
    \mWa(\bu,\bv) = \sqrt{\min_{\pi}\sum_{i=1}^N \sum_{j=1}^M \pi_{ij} \| \bu_i - \bv_j \|_2^2},
    \end{equation}
    where $\pi_{ij}$ is the optimal transport plan that satisfies the constraints:
    \[
    \sum_{j=1}^M \pi_{ij} = \frac{1}{N}, ~ \forall i; \quad \text{and} \quad \sum_{i=1}^N \pi_{ij} = \frac{1}{M}, ~ \forall j.
    \]
    In practice, we use the POT package \cite{flamary2021pot} for computation.
	 \end{itemize}

\section{Supplementary Algorithm}\label{sec:supp_alg}
For completeness, we provide Algorithm~\ref{alg:t} for determining the optimal $t$ in BPD.
\begin{algorithm}
\caption{Selection of $t$}
\label{alg:t}
\begin{algorithmic}[1]
\REQUIRE Two datasets $\{ \bul_j \}_{j=1}^M$ and $\{ \buth_i \}_{i=1}^N$, terminal searching time $T_e$, number of steps $N_{t}$, and a metric $\mathcal{M}$.
\STATE Initialize $d_{\min} \gets \infty$
\STATE Initialize $t^{*} \gets 0$, 
\FOR{$p \gets 0$ to $N_{t}-1$}
    \STATE $t_1 \gets p \cdot \frac{T_e}{N_{t}-1}$
    \STATE Compute $d = \mathcal{M}(p(\bul(t)), p(\buth(t)))$
    \IF{$d < \min$}
        \STATE $\min \gets d$
        \STATE $t^* \gets t$
    \ENDIF
\ENDFOR
\RETURN Optimal $t^*$
\end{algorithmic}
\end{algorithm}

\begin{algorithm}[H]
\caption{Conditional Diffusion Model for Super-resolution}
\label{alg:uh}
\begin{algorithmic}[1]
\REQUIRE Paired dataset $\mathcal{T} = \{ \buth_i, \buh_i \}_{i=1}^N$, noise scheduling function $\sigma(t)$, batch size $B$ and max iteration $Iter$.
\STATE Initialize $k=0$.
\WHILE{$k<Iter$}
    \STATE Sample $\{ \buth_j, \buh_j \}_{j=1}^B \sim \mathcal{T}$ 
    \STATE $t \sim U[0, 1]$
    \STATE $\beps_j \sim \mN(\mz, \mI)$ for $j=1, \cdots, B$
    \STATE Compute $\buh_j(t) = \buh_j + \sigma(t) \beps_j$
    \STATE  Update $\xi$ using Adam to minimize the empirical loss \eqref{eqn:loss_sr}
    \STATE $k \gets k+1$
\ENDWHILE
\RETURN Conditional diffusion model $S_{\xi}(\bu^h(t), \buth, t)$
\end{algorithmic}
\end{algorithm}

\section{Supplementary Figures}\label{sec:supp}
The results for Pink and Brown noises of the climate example in Section~\ref{sec:climate} are presented in Figure~\ref{fig:CM_2} and Figure~\ref{fig:CM_3} respectively.

\begin{figure}[h]
    \centering
    \includegraphics[width=0.55\linewidth]{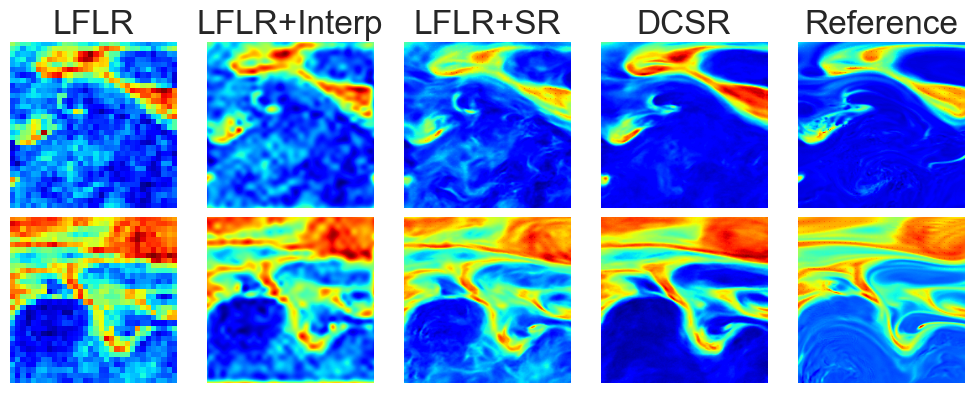}
    \includegraphics[width=0.38\linewidth]{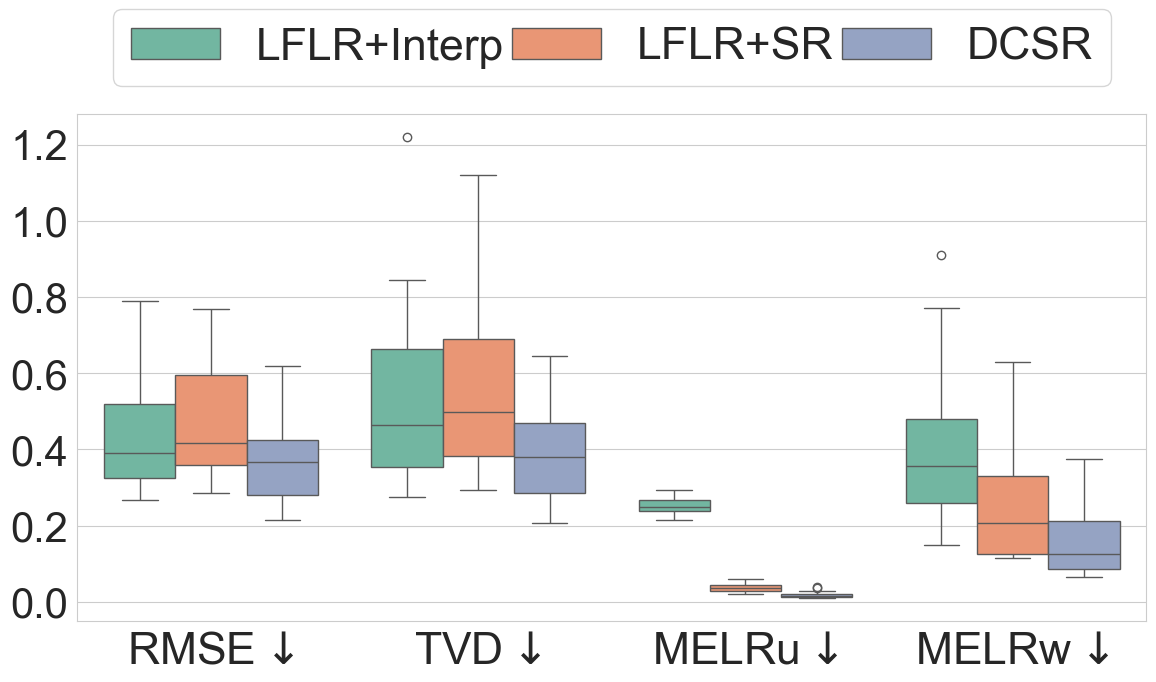}
    
    \includegraphics[width=0.55\linewidth]{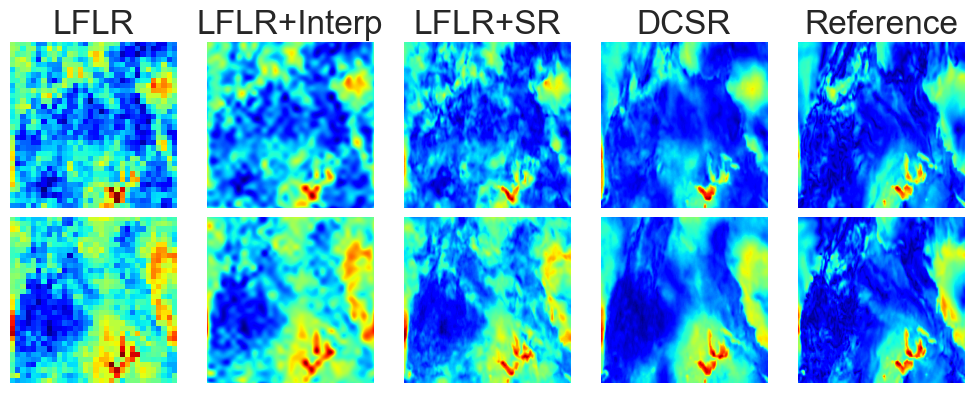}
    \includegraphics[width=0.38\linewidth]{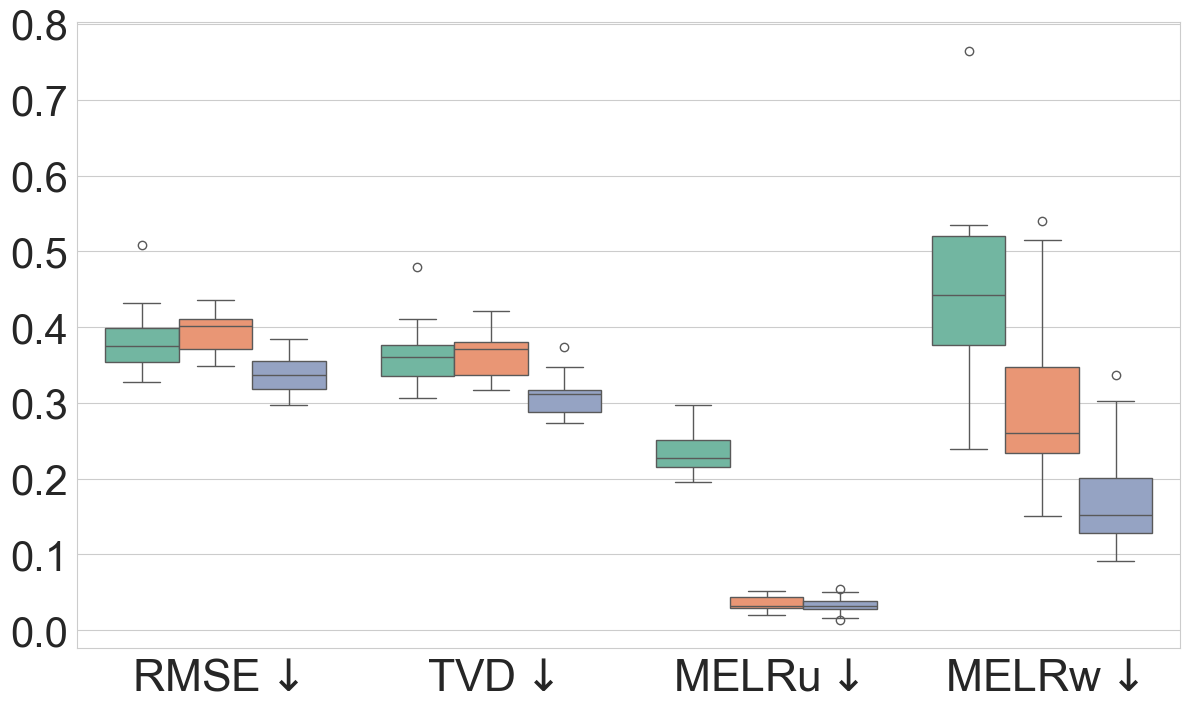}
    
    \caption{Top row presents results of low resolution potential vorticity data polluted with Pink noise and the Bottom shows the results of low resolution wind speed polluted with Pink noise. The left columns compare low-fidelity data with results from various refinement methods. The right columns present boxplots comparing performance of various refinement methods across metrics.}
    \label{fig:CM_2}
\end{figure}

\begin{figure}[h]
    \centering
    \includegraphics[width=0.55\linewidth]{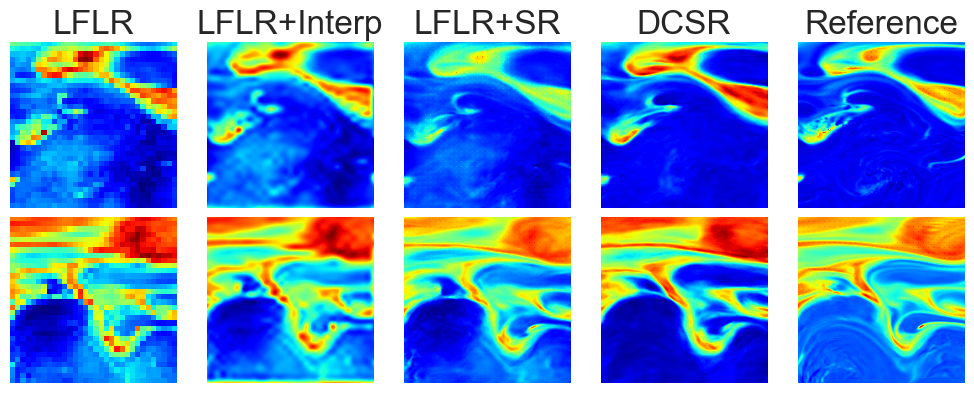}
    \includegraphics[width=0.38\linewidth]{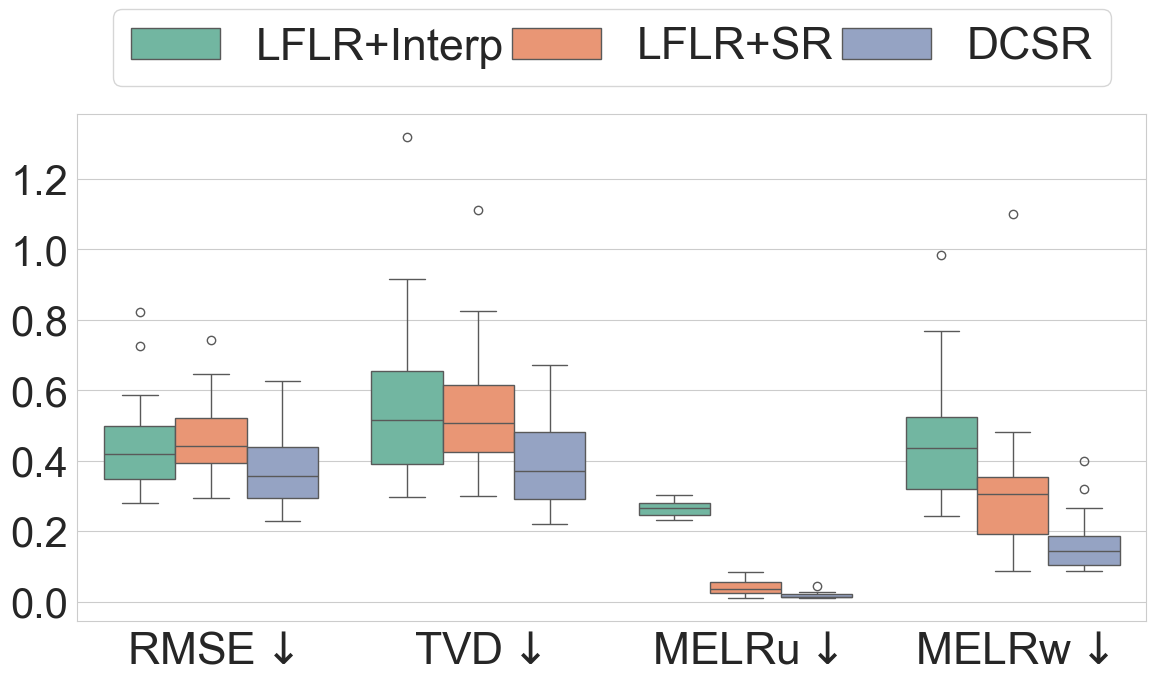}
    
    \includegraphics[width=0.55\linewidth]{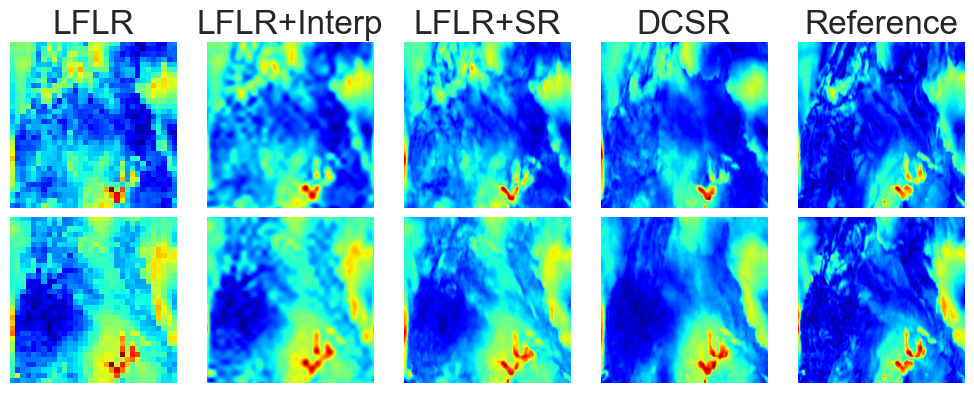}
    \includegraphics[width=0.38\linewidth]{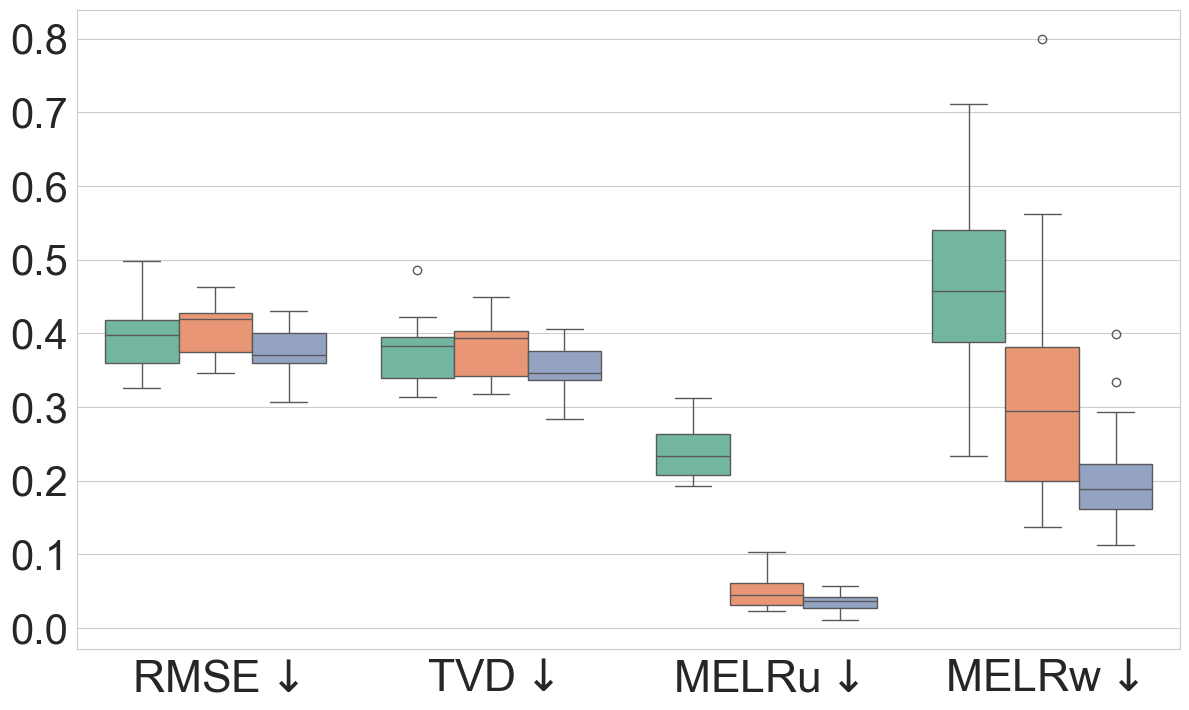}
    
    \caption{Top row presents results of low resolution potential vorticity data polluted with Brown noise and the Bottom shows the results of low resolution wind speed polluted with Brown noise. The left columns compare low-fidelity data with results from various refinement methods. The right columns present boxplots comparing performance of various refinement methods across metrics.}
    \label{fig:CM_3}
\end{figure}

\end{document}